\documentclass[12pt]{article}
\usepackage{amsmath,amsfonts,amssymb,amsthm,
mathrsfs}

\theoremstyle{definition}

\newtheorem{remark}{Remark}

\theoremstyle{plain}
\newtheorem{theorem}{Theorem}

\newcommand{\E}{\mathsf{E}}

\newcommand{\Prob}{\mathsf{P}}

\begin{document}

\title{On some extensions of generalized counting processes}
\author{
Lyudmyla Sakhno
\\
{\it\small Faculty of Mechanics and Mathematics,
	Taras Shevchenko National University of Kyiv,  Ukraine}\\
Artem Storozhuk\\
{\it\small Faculty of Mechanics and Mathematics,
	Taras Shevchenko National University of Kyiv, Ukraine}}

\maketitle

\begin{abstract} We study different fractional extensions of the Poisson process and generalized counting processes by introducing time-change represented by the inverse to the sums of stable and tempered stable subordinators. We state the governing equations for probability distributions and probability generating functions which involve fractional derivatives of different orders. Closed form expressions for probability distributions and probability generating functions are also provided for several considered models.

\medskip

\textit{Key words:} fractional derivatives, stable subordinators, inverse stable subordinators, Mittag-Leffler functions, time-changed processes, generalized counting processes
\medskip
\end{abstract}

\bigskip

\section{Introduction}

Numerous recent studies have been devoted to fractional extensions of stochastic processes defined by introducing suitable fractional derivatives into the equations to which these processes pertain. On the other side, the connection of stochastic processes with fractional equations is underpinned via Bochner-type subordination by means of inverse stable subordinators. The role of the Mittag-Leffler function should he highlighted here as that one which gives the Laplace transform of the inverse stable subordinator and at the same time presents the eigenfunction of the classical fractional Caputo-Djrbashian derivative. The literature on the topic can be traced back to 1990-s and even earlier, we refere here to several more recent sources relevant to our consideration: \cite{BO2009,BO2010, DovOrsToaldo, GKL202, MS, MNV, OBPTRF, OT, OT2017}, among many others. To go beyond the models of fractional Poisson processes, in \cite{DiC} the generalised fractional counting processes were introduced and studied, followed by their further extensions in  various directions   (see, for example, \cite{KLS}, \cite{KK2022a}, \cite{KK2022b}, \cite{KK2022Arxiv},  and references therein)

Generalized fractional calculus introduced in \cite{K}, \cite{T} has inspired the intensive studies of new types of equations and stochastic processes. In particular, new models of Poisson and generalized counting processes governed by the equations with the generalized fractional convolution-type derivatives  were introduced and investigated,  e.g., in \cite{BS2019}, \cite{BS2024}, \cite{KK2022a}, \cite{KK2022Arxiv}.

The convolution-type derivatives allow to study  the properties of subordinators and their inverses in the unifying manner (\cite{T, MT}). In particular,  the densities of inverse subordinators and their Laplace transforms solve the equations  given in terms of   convolution-type derivatives. To be  more precise, Laplace transforms of inverse subordinators are shown to be eigenfunctions of the corresponding  convolution-type derivatives, although an explicit expression for the Laplace transform is not at our disposal in a general case. However,  these properties provide important tools for study of time-changed processes:  from the equations for the densities of inverse subordinators and their Laplace transforms one can deduce the  equations for probabilities of  processes time-changed by inverse subordinators and equations for some related functionals (see, e.g., \cite{BS2019}, \cite{BS2024}). 

Our paper was greatly motivated  by the papers \cite{OT2017}, \cite{DovOrsToaldo} and \cite{BO2010}. We aim to study different fractional extensions of the Poisson process and generalized counting processes by introducing time-change represented by the inverse to the sums of stable and tempered stable subordinators.

The paper is organized as follows. 

In {\it Section 2} we collect some definitions and facts needed for further reference and use.

 In {\it Section 3} we introduce and study the Poisson and generalized counting processes time-changed by the inverse to a sums of stable subordinators. We  present the governing equations for the probability distributions, which  involve the fractional derivatives of different orders, in a form of a generalized telegraph-type operator in time. In particular cases we are able to write explicit expressions for probability distributions and also expressions for probability generating functions, as consequences of available expressions for the Laplace transforms of inverse processes. The formulas are based heavily on the use of Mittag-Leffler functions.  We next study in {\it Section 4} the processes time-changed by the inverse to a sum of tempered stable subordinators and their governing equations. Finally, in {\it Section 5},  we consider a problem which is not related to the time-change of counting processes, but concerned with their applications, namely, with evaluation of the non-ruin probability for the risk models based on generalized counting processes. We present the expression for non-ruin probability in terms of the Mittag-Leffler functions, extending the existing results.

\section{Preliminaries}

In this section we collect the necessary definitions and facts, in particular, on generalized counting processes and their fractional extensions, as prepequisites for our further study.

\subsection{Generalized fractional derivatives} 
We review briefly the main definitions and some  facts on the generalized fractional derivatives (for more details see, e.g., \cite{MT, T}.) 

Let $f(x)$ be a Bern\v{s}tein function:
\begin{equation}\label{fx}
f(x)=a+bx+\int_{0}^{\infty}\left(1-e^{-xs}\right)\overline{\nu}_f(ds), \,\,x>0, \,\,a, b\geq0, 
\end{equation}
with L\'evy measure $\overline{\nu}_f(ds)$ such that
$
\int_{0}^{\infty}\left(s\wedge 1\right)\overline{\nu}_f(ds)<\infty.$

The generalized Caputo-Djrbashian (C-D) derivative, or convolution-type deri\-va\-tive, with respect to the Bern\v{s}tein function $f$ is defined on the space of absolutely continuous functions as follows (\cite{T}, Definition 2.4):
\begin{equation}\label{fDt}
\mathcal{D}^f_t u(t)=b \frac{d}{dt}u(t)+\int_{0}^{t}\frac{\partial }{\partial t }u(t-s)\nu_f (s)ds,
\end{equation}
where $\nu_f (s)=a+\overline{\nu}_f(s,\infty)$ is the tail of the L\'evy measure $\overline{\nu}_f(s)$ of the function~$f$.

In the case where $f(x)=x^{\alpha},x>0, \alpha\in (0,1)$, the derivative \eqref{fDt} reduces to the classical fractional C-D
derivative:
\begin{equation}\label{C-D}
\mathcal{D}^f_t u(t)=\frac{d^{\alpha}}{dt^{\alpha}}u(t)=\frac{1}{\Gamma(1-\alpha)}\int_{0}^{t}\frac{u^\prime(s)}{\left(t-s\right)^{\alpha}}ds.
\end{equation}
For the Laplace transform of the derivative \eqref{fDt} the following relation holds (\cite{T}, Lemma 2.5):
\begin{equation*}\label{LfDt}
\mathcal{L}\left[\mathcal{D}^f_t u\right](s)=f(s)\mathcal{L}\left[u\right](s)-\frac{f(s)}{s}u(0), s>s_0,
\end{equation*}
for $u$ such that $|u(t)|\leq\mathsf{M} e^{s_0t}$, $M$ and $s_0$ are some constants. Similarly to the C-D fractional derivative, the convolution type derivative can be  alternatively defined via its Laplace transform.

The generalization of the classical Riemann-Liouville (R-L) fractional derivative is introduced in \cite{T} by means of another convolution-type derivative with respect to $f$ given as
\begin{equation}\label{fDDt}
\mathbb{D}_t^f u(t)=b \frac{d}{dt}u(t)+\frac{d}{dt}\int_{0}^{t}u(t-s)\nu_f (s)ds.
\end{equation}
The derivatives $\mathcal{D}^f_t$ and $\mathbb{D}_t^f$ are related as follows (see, \cite{T}, Proposition 2.7):
\begin{equation}\label{fDtrelation}
\mathbb{D}_t^f u(t)=\mathcal{D}^f_t u(t) +\nu_f (t)u(0).
\end{equation}

Let $H^f(t)$, $t\geq0,$ be a subordinator, that is, nondecreasing L\'evy process with  Laplace transform 
$$\mathcal{L}[H^f(t)](s)=\E e^{-sH^f(t)}= e^{-t f(s)},$$
 where the function $f$, called the Laplace exponent, is a Bern\v{s}tein function. Let $Y^f$ be the inverse process defined as
\begin{equation}\label{Yf}
Y^f(t)=\inf \left\{s\geq0:H^f(s)>t\right\}.
\end{equation}
It was shown in \cite{T} that the distribution of the inverse process $Y^f$ has a density 
$\ell_f(t,x)={\rm P}\{Y^f(t)\in dx\}/dx$ 
provided that the following condition holds:

\medskip
{\it Condition I.} $\overline{\nu}_f(0,\infty)=\infty$ and the tail $\nu_f(s)=a+\overline{\nu}_f(s,\infty)$ is absolutely continuous.
\medskip

 The Laplace transform of the density of the inverse subordinator with respect to $t$ is (\cite{T}):
\begin{equation}\label{t-lapl-inv}
\mathcal{L}_t\left(\ell_f(t,x)\right)(r)=\frac{f(r)}{r}e^{-xf(r)}.
\end{equation}

The density $\ell_f(t,u)$ of the inverse process $Y^f$ satisfies the follo\-wing equation (\cite{T}):
\begin{align}\label{dlf}
	&{\mathbb{D}}_t^f \ell_f(t,u)=-\frac{\partial}{\partial  u} \ell_f(t,u), \, t>0,
	\,\,\,0<u<\infty,\, {\rm if}\, b=0,\, 
	\, 0<u<t/b,\, {\rm if}\, b>0,
\end{align}
subject to
\begin{equation}\label{dlfin}
\ell_f(t, {u}/{b})=0,\,\, \ell_f(t,0)=\nu_f(t),\,\, \ell_f(0,u)=\delta(u).
\end{equation}

The space Laplace transform of the density $\ell_f(t,x)$ 
\begin{equation}\label{tilde_l}
\tilde{\ell_f}(t,\lambda)=\int_{0}^{\infty}e^{-\lambda x} \ell_f(t,x)dx= {\E}e^{-\lambda Y^f(t)}, \,\, t\ge0, \,\lambda>0,
\end{equation}
is an eigenfunction of the operator $\mathcal{D}^f_t$, that is, satisfies the equation
	\begin{equation}\label{lapl}
		\mathcal{D}^f_t\tilde{\ell_f}(t,\lambda)=-\lambda \tilde{\ell_f}(t,\lambda),\,\,t>0,
	\end{equation}
with $\tilde{\ell_f}(0,\lambda)=1$ (see, \cite{BS2019, K, MT}). 
If $f(x)=x^{\alpha},x>0, \alpha\in (0,1)$, then $\tilde{\ell_f}(t,\lambda)=
{E}_\alpha(-\lambda t^\alpha)$, where $
{E}_\alpha(\cdot)$ is the Mittag-Leffler function:
\begin{equation}\label{ML}
{E}_\alpha(z)=\sum_{k=0}^\infty \frac{z^k}{\Gamma(\alpha k +1)}, \,\, z\in \mathcal{C}, \, \alpha\in  (0, \infty). 
\end{equation}

\subsection{Mittag-Leffler functions}
In addition to the function \eqref{ML}, the  two-parameter and three parameter Mittag-Leffler functions are fundamental in the study of probability laws of fractional processes, which are defined as
\begin{equation}\label{ML2}
	E_{\alpha,\beta}(z) = \sum_{r=0}^{\infty} \frac{z^r}{\Gamma(\alpha r + \beta)}, \quad \alpha, \beta \in \mathbb{C}, Re(\alpha), Re(\beta) > 0, z \in \mathbb{R};
\end{equation}
\begin{equation}\label{ML3}
	E_{\alpha,\beta}^{\gamma}(z) = \sum_{r=0}^{\infty} \frac{(\gamma)_r z^r}{r! \Gamma(\alpha r + \beta)}, \quad \alpha, \beta, \gamma \in \mathbb{C}, Re(\alpha), Re(\beta), Re(\gamma) > 0,
\end{equation}
where $(\gamma)_r = \gamma(\gamma+1)\dots(\gamma+r-1)$ for $r=1,2,\dots$, and $(\gamma)_0 = 1$ denotes the Pochhammer symbol (see \cite{HMS}).

We will also use the  generalized multivariate Mittag-Leffler function defined as follows
\begin{equation} \label{multML}
	E_{\alpha,\beta}^{\gamma_1, \dots, \gamma_m}(z_1, \dots, z_m) = \sum_{k_1=0}^{\infty} \dots \sum_{k_m=0}^{\infty} \frac{(\gamma_1)_{k_1} \dots (\gamma_m)_{k_m}}{\Gamma\big(\alpha \sum_{j=1}^m k_j + \beta\big)} \frac{z_1^{k_1}}{k_1!} \dots \frac{z_m^{k_m}}{k_m!},
\end{equation}
where $\gamma_1, \dots, \gamma_m \in \mathbb{C}$, $z_1, \dots, z_m \in \mathbb{C}$,  with $Re\gamma_1, \dots, Re\gamma_m >0$, $Re\alpha>0$. Function \eqref{multML} is a particular variant of the multivariate Mittag-Leffler function introduced in \cite{Saxena}. The following Laplace transform formula holds (see, e.g., \cite{CO}):
\begin{equation}\label{multMLfLapl}
		\int_0^\infty e^{-\mu x} x^{\delta-1} E_{\nu,\delta}^{(\gamma_1, \dots, \gamma_M)}(x \eta) dx = \frac{\mu^{\nu \sum_{j=1}^M \gamma_j - \delta}}{\prod_{j=1}^M (\mu^\nu - \eta_j)^{\gamma_j}},
		\end{equation}
for $\eta_1, \dots, \eta_m \in \mathbb{C}$, $\nu>0$, $\left|\frac{\eta_j}{\mu^\nu}\right|<1$.


\subsection{Fractional Poisson process}\label{fractionalPoisson}
The fractional Poisson process (more precisely, time-fractional), denoted by $\{N_{\lambda}^{\nu}(t); t \ge 0\}$ for $\nu \in (0,1]$ and $\lambda >0$, has the  probabilities $p_n(t)=\mathbb{P}\{N_{\lambda}^{\nu}(t)=n\}$ governed by the fractional equations
\begin{equation*}
	\frac{d^\nu p_n(t)}{dt^\nu} = -\lambda(p_n(t) - p_{n-1}(t)), \quad n \ge 0,
\end{equation*}
with $p_{-1}(t) = 0$, subject to the initial conditions
	$p_n(0) = \delta_{n0}$,
which can be explicitly given as
\begin{equation*}
	p_n(t) = (\lambda t^\nu)^n E_{\nu, \nu n+1}^{n+1}(-\lambda t^\nu), \quad n \ge 0,\,\, t > 0.
\end{equation*}


\subsection{Generalized fractional counting processes}
Generalized counting process (GCP) $M(t)$, $t\ge 0$,  was introduced in \cite{DiC} as a counting process defined by following rules:
\begin{enumerate}
	\item $M(0) = 0 \quad \text{a.s.};$
	\item $M(t)$ has stationary and independent increments;
	\item $\mathbb{P}\{M(h) = j\} = \lambda_j h + o(h), \quad \text{for } j = 1, 2, \dots, k;$
	\item $\mathbb{P}\{M(h) > k\} = o(h),$
\end{enumerate}
where $k \in \mathbb{N} \equiv \{1, 2, \dots\}$ is fixed, and $\lambda_1, \lambda_2, \dots, \lambda_k > 0$.

 The probabilities $\tilde p_n(t)=\Prob\left\{M(t)=n\right\}$  depend on $k$ parameters $\lambda_1, \ldots, \lambda_k$ and are given by the formula
\begin{equation*}
             \tilde p_n(t)=\sum_{\Omega(k,n)}^{}\prod_{j=1}^{k}\frac{\left(\lambda_j t\right)^{x_j}}{x_j!}e^{-\Lambda t}, n\geq0,\label{pnt}
\end{equation*}
where $\Omega(k,n)=\left\{(x_1,\dots,x_k)\,:\,\sum_{j=1}^{k}jx_j=n,x_j\in N_0\right\}
$, $\Lambda=\sum_{j=1}^{k}\lambda_j$. \\
GCP performs $k$ kinds of jumps of amplitude $1, 2, \ldots, k$ with rates $\lambda_1, \ldots, \lambda_k$.
Note that GCP comprises as particular cases such important for applications models as the Poisson process of order $k$ and P\'olya-Aeppli process of order $k$ (see, e.g. \cite{KK2022b}, \cite{KLS}).

The probabilities $\tilde p_n(t)$ satisfy
\begin{equation}\label{pGCP}
            \frac{d\tilde p_n(t)}{dt}=- \Lambda \tilde p_n(t) + \sum_{j=1}^{\min\{n,k\}}\lambda_j \tilde p_{n-j}(t), \, n\geq0,
\end{equation}
with the usual initial condition. 
The probabilities $\tilde p_n(t)$ can be also written as 
\begin{equation*}\label{pnt31}
             \tilde p_n(t)=\sum_{\Omega(k,n)}^{}\prod_{j=1}^{k}\frac{\lambda_j^{x_j} }{x_j!}\left(-\partial_{\Lambda}\right)^{z_k} e^{-\Lambda t}, \, n\geq0,
\end{equation*}
where 
             $z_k=\sum_{j=1}^{k}x_j$ (see \cite{KK2022Arxiv}).
             
The probability generating function of GCP is given by (\cite{KK2022a}):
\begin{equation}
             \tilde G(u,t)=\E u^{M(t)}=\exp\Big\{
-\sum_{j=1}^{k}\lambda_j(1-u^j)t\Big\},\, |u|<1.
\end{equation}


Fractional extension of the generalized counting process $M(t)$ was introduced in \cite{DiC} as the process $M^\nu(t)$, $t \ge 0$, $\nu \in (0,1]$, whose probability distribution
$$
	\tilde p_n^\nu(t) = \mathbb{P}\{M^\nu(t) = n\}, \quad n\ge0,$$	
satisfies the following system of fractional difference-differential equations 
\begin{align}\label{Dic1}
	&\frac{\mathrm{d}^\nu \tilde p_n^\nu(t)}{\mathrm{d}t^\nu} = - \Lambda \tilde p_n^\nu(t) + \sum_{r=1}^{\min\{k,n\}} \lambda_r \tilde p_{n-r}^\nu(t),\,\, n>0,\\
	&\frac{\mathrm{d}^\nu \tilde p_0^\nu(t)}{\mathrm{d}t^\nu} = -\Lambda \tilde p_0^\nu(t), \nonumber
\end{align}
for $\Lambda = \lambda_1 + \lambda_2 + \dots + \lambda_k$,  with the initial condition
\begin{equation}\label{Dic2}
	\tilde p_n(0) = \begin{cases}
		1, & n = 0 \\
		0, & n \ge 1.
	\end{cases}
\end{equation}
The next two theorems from \cite{DiC} give the relation of the process $M^\nu(t)$ with the fractional Poisson process  the $N_\Lambda^\nu(t)$ and the expressions for  probabilities $\tilde p_n^\nu(t)$.
\begin{theorem}[\cite{DiC}]\label{thmDic}
	For all fixed $\nu \in (0, 1]$ we have
	\begin{equation}
		M^\nu(t) \stackrel{d}{=} \sum_{i=1}^{N_\Lambda^\nu(t)} X_i, \qquad t \ge 0,
	\end{equation}
	where $N_\Lambda^\nu(t)$ is a fractional Poisson process (see Section \ref{fractionalPoisson}), with intensity $\Lambda = \lambda_1 + \lambda_2 + \dots + \lambda_k$, and  $\{X_n : n \ge 1\}$ is a sequence of i.i.d. random variables, independent of $N_\Lambda^\nu(t)$, such that for any $n \in \mathbb{N}$
	\begin{equation}
		\mathbb{P}\{X_n = j\} = \frac{\lambda_j}{\Lambda}, \qquad j = 1, 2, \dots, k
	\end{equation}
	and where both $N_\Lambda^\nu(t)$ and $X_n$ depend on the same parameters $\lambda_1, \lambda_2, \dots, \lambda_k$.
\end{theorem}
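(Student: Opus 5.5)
The plan is to prove the equality in distribution by comparing probability generating functions. Both sides take values in $\mathbb{N}_0$, so it is enough to show that for $|u|<1$ the functions $G^\nu(u,t)=\E u^{M^\nu(t)}=\sum_{n\ge0}\tilde p_n^\nu(t)u^n$ and $\E u^{\sum_{i=1}^{N_\Lambda^\nu(t)}X_i}$ coincide for all $t\ge0$.

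The right-hand side is computed directly. Conditioning on $N_\Lambda^\nu(t)$ and using independence gives
\[
\E u^{\sum_{i=1}^{N_\Lambda^\nu(t)}X_i}=\E\big(\phi(u)\big)^{N_\Lambda^\nu(t)},\qquad \phi(u)=\E u^{X_1}=\sum_{j=1}^k\frac{\lambda_j}{\Lambda}u^j .
\]
Next we need the probability generating function of the fractional Poisson process. Multiplying the equations of Section \ref{fractionalPoisson} by $v^n$ and summing, $G_N(v,t)=\E v^{N_\Lambda^\nu(t)}$ satisfies $\frac{d^\nu}{dt^\nu}G_N=-\Lambda(1-v)G_N$ with $G_N(v,0)=1$. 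Hence $G_N(v,t)=E_\nu(-\Lambda(1-v)t^\nu)$; this follows from the Laplace-transform formula for the Caputo derivative with $f(s)=s^\nu$, or equivalently from the eigenfunction property (\ref{lapl}). Substituting $v=\phi(u)$, the right-hand side equals $E_\nu\big(-\sum_{j=1}^k\lambda_j(1-u^j)t^\nu\big)$, because $\Lambda(1-\phi(u))=\sum_j\lambda_j(1-u^j)$.

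For the left-hand side, we multiply the system (\ref{Dic1}) by $u^n$ and sum over $n\ge0$. The coupling term rearranges, after swapping sums, as
\[
\sum_{n}\sum_{r=1}^{\min\{k,n\}}\lambda_r\tilde p^\nu_{n-r}u^n=\sum_{r=1}^k\lambda_ru^r G^\nu .
\]
This gives
\[
\frac{d^\nu}{dt^\nu}G^\nu(u,t)=-\sum_{j=1}^k\lambda_j(1-u^j)\,G^\nu(u,t),\qquad G^\nu(u,0)=1
\]
by (\ref{Dic2}). Its unique solution, found via the Laplace transform $\mathcal{L}[G^\nu](s)=s^{\nu-1}/(s^\nu+c)$ with $c=\sum_j\lambda_j(1-u^j)$, is $E_\nu(-ct^\nu)$. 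This matches the right-hand side, and the theorem follows by uniqueness of probability generating functions.

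The main obstacle is justifying the termwise fractional differentiation of the series defining $G^\nu$, and the uniqueness of the solution of the fractional Cauchy problem. For $|u|<1$ and $0\le\tilde p_n^\nu\le1$ the series converges uniformly in $u$. A clean route is to take Laplace transforms in $t$ of each equation first, which gives the algebraic recursion $(s^\nu+\Lambda)\hat p_n=\sum_r\lambda_r\hat p_{n-r}+s^{\nu-1}\delta_{n0}$. We then sum the generating series in the transform domain, where interchanging sums is justified by absolute convergence for $s$ large. Uniqueness is then automatic from injectivity of the Laplace transform. For $\nu=1$ everything reduces to the exponential formula for $\tilde G(u,t)$ already given.
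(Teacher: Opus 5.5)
The paper gives no proof of this statement. It is quoted from \cite{DiC} and only used later, in the proof of Theorem~\ref{fM2}. Your argument is correct and self-contained.

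\begin{itemize}
\item Comparing probability generating functions on $u\in[0,1)$ is legitimate for $\mathbb{N}_0$-valued variables, and the identity $\Lambda(1-\phi(u))=\sum_{j=1}^k\lambda_j(1-u^j)$ is right.
\item Doing the summation over $n$ in the Laplace domain handles both termwise differentiation and uniqueness, since $0\le\hat p_n(s)\le 1/s$ justifies the interchange. This step is needed: the eigenfunction property \eqref{lapl} alone only exhibits a solution, it does not show it is the only one.
\item One slip of wording: the domination you need is uniform in $t$, namely $\sum_n|u|^n\tilde p_n^\nu(t)\le(1-|u|)^{-1}$, not uniform in $u$.
\end{itemize}

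Two other routes are visible inside the paper, and your approach compares favourably with both.

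\textbf{Conditioning.} One can condition on $N_\Lambda^\nu(t)$ exactly as in the proof of Theorem~\ref{fM2}, using the multinomial law of $X_1+\dots+X_r$ and $\Prob\{N_\Lambda^\nu(t)=r\}=(\Lambda t^\nu)^rE^{r+1}_{\nu,\nu r+1}(-\Lambda t^\nu)$. This reproduces term by term the explicit formula for $\tilde p_j^\nu(t)$ quoted right after the statement, so the representation is equivalent to that formula. Proving it this way, however, requires first establishing the explicit Mittag-Leffler solution and doing the multinomial bookkeeping. Your argument needs only the Laplace symbol $s^{\nu-1}/(s^\nu+c)$.

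\textbf{Subordination.} Since $M$ is compound Poisson, $M(s)\stackrel{d}{=}\sum_{i=1}^{N_\Lambda(s)}X_i$ as processes. An independent inverse $\nu$-stable time change $Y^\nu$ then gives the representation for $M(Y^\nu(t))$ at once, even at the process level. This is what the paper tacitly does in Theorem~\ref{fM2}, with $\mathcal{L}^\nu$ in place of $Y^\nu$. But $M^\nu$ is defined only through \eqref{Dic1}--\eqref{Dic2}, so this route still needs two further ingredients:
\begin{itemize}
\item Theorem~\ref{Th5} with $f(x)=x^\nu$ and $\psi(x)=x$, whose pgf \eqref{Gpsif1} is exactly your $E_\nu\bigl(-\sum_j\lambda_j(1-u^j)t^\nu\bigr)$;
\item uniqueness for the Cauchy problem, which is precisely what your Laplace step supplies.
\end{itemize}

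Your method also transfers verbatim to the operator $\frac{d^{2\nu}}{dt^{2\nu}}+2\lambda\frac{d^{\nu}}{dt^{\nu}}$ of Theorem~\ref{fPequ}: simply replace $s^{\nu-1}/(s^\nu+c)$ by $(s^{2\nu-1}+2\lambda s^{\nu-1})/(s^{2\nu}+2\lambda s^\nu+c)$.
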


\begin{theorem}[\cite{DiC}]
	The solution $\tilde p_j^\nu(t)$ of the Cauchy problem \eqref{Dic1}--\eqref{Dic2}, for $j \in \mathbb{N}_0, \nu \in (0,1]$ and $t \ge 0$, is given by
	\begin{equation}
		\tilde p_j^\nu(t) = \sum_{r=0}^j \sum_{\substack{\alpha_1+\alpha_2+\dots+\alpha_k=r \\ \alpha_1+2\alpha_2+\dots+k\alpha_k=j}} \binom{r}{\alpha_1, \alpha_2, \dots, \alpha_k} \lambda_1^{\alpha_1} \lambda_2^{\alpha_2} \dots \lambda_k^{\alpha_k} t^{r\nu} E_{\nu,r\nu+1}^{r+1}(-\Lambda t^\nu).
	\end{equation}
\end{theorem}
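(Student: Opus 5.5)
The natural route is the Laplace transform in $t$, applied to the Cauchy problem \eqref{Dic1}--\eqref{Dic2}. Write $\hat p_j(s)=\mathcal{L}[\tilde p_j^\nu](s)$. By the Laplace rule for the Caputo--Djrbashian derivative (the case $f(s)=s^\nu$ of the formula for $\mathcal{L}[\mathcal{D}^f_t u]$), we have $\mathcal{L}[d^\nu u/dt^\nu](s)=s^\nu\hat u(s)-s^{\nu-1}u(0)$. The system then becomes the algebraic recursion
\begin{equation*}
(s^\nu+\Lambda)\hat p_j(s)=\delta_{j0}s^{\nu-1}+\sum_{r=1}^{\min\{k,j\}}\lambda_r\hat p_{j-r}(s).
\end{equation*}
It is cleanest to solve this through the generating function $\hat G(u,s)=\sum_j u^j\hat p_j(s)$. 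The recursion gives
\begin{equation*}
\hat G(u,s)=\frac{s^{\nu-1}}{s^\nu+\Lambda-\sum_{r=1}^k\lambda_r u^r},
\end{equation*}
and we expand it as a geometric series in $Q(u)=\sum_r\lambda_r u^r/(s^\nu+\Lambda)$:
\begin{equation*}
\hat G(u,s)=\sum_{r\ge0}\frac{s^{\nu-1}\bigl(\sum_{i}\lambda_i u^i\bigr)^r}{(s^\nu+\Lambda)^{r+1}}.
\end{equation*}
This is valid for $s$ large and $|u|<1$.

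Next, expand $\bigl(\sum_i\lambda_iu^i\bigr)^r$ by the multinomial theorem. The coefficient of $u^j$ is the sum over $\alpha_1+\dots+\alpha_k=r$ with $\sum_i i\alpha_i=j$ of $\binom{r}{\alpha_1,\dots,\alpha_k}\prod_i\lambda_i^{\alpha_i}$. Since $\sum_i i\alpha_i=j$ forces $r\le j$, only $r=0,\dots,j$ contribute. Hence
\begin{equation*}
\hat p_j(s)=\sum_{r=0}^j\ \sum_{\alpha}\binom{r}{\alpha_1,\dots,\alpha_k}\lambda_1^{\alpha_1}\cdots\lambda_k^{\alpha_k}\,\frac{s^{\nu-1}}{(s^\nu+\Lambda)^{r+1}}.
\end{equation*}

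The inversion uses the standard formula for the three-parameter Mittag-Leffler function,
\begin{equation*}
\mathcal{L}\bigl[t^{\beta-1}E^{\gamma}_{\nu,\beta}(-\Lambda t^\nu)\bigr](s)=\frac{s^{\nu\gamma-\beta}}{(s^\nu+\Lambda)^\gamma}.
\end{equation*}
This is exactly the case $M=1$ of \eqref{multMLfLapl}. With $\gamma=r+1$ and $\beta=r\nu+1$ we get $s^{\nu(r+1)-r\nu-1}=s^{\nu-1}$, so each term inverts to $t^{r\nu}E^{r+1}_{\nu,r\nu+1}(-\Lambda t^\nu)$. This yields the claimed formula.

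The main obstacle is rigor rather than algebra. Two points need justification. First, we must interchange the geometric and multinomial sums with the inverse Laplace transform; working coefficientwise avoids this, because the recursion for each fixed $j$ is finite and can be solved by induction on $j$ without any series in $u$. Second, the formula \eqref{multMLfLapl} holds only for $|\Lambda/s^\nu|<1$, so we invoke uniqueness of the Laplace transform on a right half-line. A quick alternative check is via Theorem \ref{thmDic}: condition on $N^\nu_\Lambda(t)=r$, whose probability is $(\Lambda t^\nu)^rE^{r+1}_{\nu,r\nu+1}(-\Lambda t^\nu)$, and multiply by the multinomial probability $\binom{r}{\alpha}\prod_i(\lambda_i/\Lambda)^{\alpha_i}$ that $X_1+\dots+X_r=j$. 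The powers of $\Lambda$ cancel, giving the same expression.
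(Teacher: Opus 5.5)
Your argument is correct, and it takes a different route from the one argument the paper offers. The paper does not prove this statement; it quotes it from \cite{DiC}. The closest argument in the paper is the proof of Theorem~\ref{fM2}, which obtains the analogous formula exactly as in your closing ``quick check''. It conditions on the number $r$ of Poisson events in the representation of Theorem~\ref{thmDic}, multiplies by the multinomial probability that $X_1+\dots+X_r=j$, and substitutes the known Poisson-type marginals, so that $\Lambda^{-r}$ cancels against $\Lambda^r$.

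Your main route starts instead from the Cauchy problem \eqref{Dic1}--\eqref{Dic2} itself. You reduce it by the Laplace transform to a triangular recursion and invert term by term with the Prabhakar formula, in the spirit of the paper's proofs of Theorem~\ref{fP3} and of \eqref{eq:T12_2}. What this buys:
\begin{itemize}
\item It is self-contained: it needs neither Theorem~\ref{thmDic} nor the fractional Poisson law.
\item It identifies the formula as the unique exponentially bounded solution of the stated problem.
\item If you keep $u$, it gives $\hat G(u,s)=s^{\nu-1}/\bigl(s^\nu+\Lambda(1-\E u^{X_1})\bigr)$, from which Theorem~\ref{thmDic} itself follows by comparing generating functions.
\end{itemize}
So, used as a proof rather than a check, the conditioning argument rests on a result whose natural proof is your own computation. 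The conditioning route, in turn, is shorter and explains the coefficients probabilistically. It also transfers verbatim to any time change whose Poisson marginals are known, which is why the paper uses it for $M(\mathcal{L}^\nu(t))$.

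The coefficientwise induction you mention does close, because the multinomial sums $A_{j,r}$ satisfy $A_{j,r}=\sum_{i}\lambda_iA_{j-i,r-1}$. That is the right way to avoid interchanging the series with the inverse transform. Finally, \eqref{multMLfLapl} as printed has argument $x\eta$ where $x^\nu\eta$ is meant; your $M=1$ version is the correct one.
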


In the paper \cite{BS2024} the authors studied the time-changed process  ${M}^{\psi,f}(t)=M(H^{\psi}(Y^f(t))$, that is, the generalized counting process with double time-change by an independent subordinator $H^{\psi}$ and an inverse subordinator $Y^f$, which are independent of $M$. The probabilities $\tilde{p}_n^{\psi,f}(t)=\Prob\left\{{M}^{\psi,f}(t)=n\right\}$ and the probability generating function of ${M}^{\psi,f}$ are characterized in the following theorem.
\begin{theorem}[\cite{BS2024}]\label{Th5}
   	 The process ${M}^{\psi,f}$ has probability distribution function
   	 \begin{equation}\label{pnpsif}
   	          \tilde{p}_n^{\psi,f}(t)=\sum_{\Omega(k,n)}^{}\prod_{j=1}^{k}\frac{\lambda_j ^{x_j}}{x_j!}\left(-\partial_{\Lambda}\right)^{z_k}\tilde{\ell}_f(t,\psi (\Lambda)), \, n\ge 0,
    	\end{equation}
    	and $\tilde{p}_n^{\psi,f}(t)$ satisfy the following equation
    	\begin{equation}
   		 \mathcal{D}^f_t \tilde{p}_n^{\psi,f}(t)=-\psi(\Lambda)\tilde{p}_n^{\psi,f}(t) -\sum_{m=1}^{n}\sum_{\Omega(k,m)}^{}\psi^{(z_k)}\left(\Lambda\right) \prod_{j=1}^{k}\frac{(-\lambda_j )^{x_j}}{x_j!}\tilde{p}_{n-m}^{\psi,f}(t),\label{Dtfpn}, \, n\ge 0, \,t>0,
  	\end{equation}
   	 with initial conditions 
    \begin{equation*}
         \tilde{p}_n^{\psi,f}(0)=
         \begin{cases}
                1,\quad n=0,\\
                0,\quad n\geq1.\\
         \end{cases}
     \end{equation*}
        The probability generating function of the process ${M}^{\psi,f}$ is of the form
\begin{equation}\label{Gpsif1}
             \tilde G^{\psi,f}(u,t)=\tilde \ell_f\Big(t, \psi\Big(\sum_{j=1}^{k}\lambda_j(1-u^j)\Big)\Big),\, |u|<1,
\end{equation}
and satisfies the equation
\begin{equation}\label{DftG}
        	\mathcal{D}^f_t \tilde G^{\psi,f}(u,t)=-\psi\Big(\sum_{j=1}^{k}\lambda_j(1-u^j)\Big)\tilde G^{\psi,f}(u,t)
    \end{equation}
    with $\tilde G^{\psi,f}(u,0)=1$.
    The derivatives used in \eqref{Dtfpn} and \eqref{DftG} are the C-D convolution-type derivatives defined in \eqref{fDt}.
\end{theorem}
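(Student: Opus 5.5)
The plan is to condition on the time change and use independence throughout. Write $M^{\psi,f}(t)=M(H^\psi(Y^f(t)))$. The three processes $M$, $H^\psi$ and $Y^f$ are independent, so I condition successively on $Y^f(t)=y$ and then on $H^\psi(y)=s$.

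\emph{Step 1: the probability generating function.} Using the GCP generating function $\tilde G(u,s)=\exp\{-s\sum_j\lambda_j(1-u^j)\}$, I get
\[
\E\bigl[u^{M(H^\psi(y))}\bigr]=\E\exp\Bigl\{-H^\psi(y)\sum_j\lambda_j(1-u^j)\Bigr\}=\exp\Bigl\{-y\,\psi\Bigl(\sum_j\lambda_j(1-u^j)\Bigr)\Bigr\}.
\]
This uses the Laplace transform of the subordinator. The argument $\sum_j\lambda_j(1-u^j)$ has nonnegative real part for $|u|<1$, so extending $\psi$ there is harmless; alternatively, work with $u\in[0,1)$ and extend analytically afterwards. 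Integrating against $\ell_f(t,y)\,dy$ and using \eqref{tilde_l} gives \eqref{Gpsif1}. Equation \eqref{DftG} then follows at once from the eigenfunction property \eqref{lapl} with $\lambda=\psi(\sum_j\lambda_j(1-u^j))$. The initial condition holds because $\tilde\ell_f(0,\cdot)=1$.

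\emph{Step 2: the distribution \eqref{pnpsif}.} The GCP probabilities can be written as $\tilde p_n(s)=\sum_{\Omega(k,n)}\prod_j\frac{\lambda_j^{x_j}}{x_j!}(-\partial_\Lambda)^{z_k}e^{-\Lambda s}$. Here $\Lambda$ is treated as an independent variable, and the $\lambda_j$ in the prefactor are frozen. Then
\[
\Prob\{M^{\psi,f}(t)=n\}=\int_0^\infty\!\!\int_0^\infty \tilde p_n(s)\,\Prob\{H^\psi(y)\in ds\}\,\ell_f(t,y)\,dy .
\]
I would pull the operator $(-\partial_\Lambda)^{z_k}$ outside both integrals, which gives $\int\!\!\int e^{-\Lambda s}\cdots=\E e^{-\psi(\Lambda)Y^f(t)}=\tilde\ell_f(t,\psi(\Lambda))$. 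Differentiation under the integral needs justification: $s^m e^{-\Lambda s}$ is integrable uniformly for $\Lambda$ in a neighbourhood bounded away from zero, and $\psi$ and $\tilde\ell_f$ are smooth in $\Lambda>0$, since Bernstein functions are $C^\infty$ on $(0,\infty)$.

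\emph{Step 3: the governing equation \eqref{Dtfpn}.} This is the main obstacle. I would derive it from the generating function rather than differentiating \eqref{pnpsif}. Set $w(u)=\sum_j\lambda_j(1-u^j)=\Lambda-\sum_j\lambda_ju^j$, and Taylor-expand $\psi$ around $\Lambda$:
\[
\psi(w(u))=\sum_{m\ge0}\frac{\psi^{(m)}(\Lambda)}{m!}\Bigl(-\sum_j\lambda_ju^j\Bigr)^m .
\]
The multinomial expansion of the power, collecting powers of $u$, gives
\[
\psi(w(u))=\psi(\Lambda)+\sum_{m\ge1}u^m\sum_{\Omega(k,m)}\psi^{(z_k)}(\Lambda)\prod_j\frac{(-\lambda_j)^{x_j}}{x_j!}.
\]
Convergence is the delicate point, because $\psi$ need not be analytic in a disc of radius $\Lambda$. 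I would first take $u$ small and use the fact that Bernstein functions extend analytically to the right half-plane. Since $|\sum_j\lambda_ju^j|<\Lambda$ for small $|u|$, the series converges there, and equating coefficients of power series is enough. Next, substitute $\tilde G^{\psi,f}=\sum_n u^n\tilde p_n^{\psi,f}$ into \eqref{DftG} and interchange $\mathcal D^f_t$ with the summation over $n$. This interchange is justified because the probabilities are bounded by $1$ and the series converges uniformly for $|u|\le r<1$; linearity and dominated convergence in the convolution \eqref{fDt} then apply. Comparing coefficients of $u^n$ by the Cauchy product yields \eqref{Dtfpn}, and the initial conditions follow from $M^{\psi,f}(0)=0$.
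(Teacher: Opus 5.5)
The paper gives no proof of this statement; it is quoted from \cite{BS2024}. The proof of Theorem~\ref{fPequ} indicates the route taken there for \eqref{Dtfpn}. One writes $\tilde p_n^{\psi,f}(t)=\int_0^\infty \Prob\{M(H^\psi(y))=n\}\,\ell_f(t,y)\,dy$ and applies the R-L operator $\mathbb{D}^f_t$ under the integral. One then uses the density equation \eqref{dlf} with the boundary values \eqref{dlfin}, integrates by parts in $y$, inserts the forward equations of the L\'evy process $M(H^\psi(\cdot))$, and returns to the C-D derivative via \eqref{fDtrelation}.

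Your proof is correct and runs in the opposite direction. You get \eqref{Gpsif1} and \eqref{DftG} by conditioning and the eigenfunction property \eqref{lapl}. You then read \eqref{Dtfpn} off as the $u^n$-coefficients of $\psi\bigl(\sum_j\lambda_j(1-u^j)\bigr)\tilde G^{\psi,f}(u,t)$.

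What your route buys:
\begin{itemize}
\item It dispenses with the density equation, the boundary terms at $y=0$ and $y=\infty$, and the R-L/C-D conversion.
\item It uses the law of $Y^f(t)$ only through its Laplace transform.
\item It shows directly that the coefficients $\psi^{(z_k)}(\Lambda)\prod_j(-\lambda_j)^{x_j}/x_j!$ are Taylor coefficients. The forward equations of $M(H^\psi(\cdot))$ needed in the density route are most easily obtained from exactly this expansion anyway.
\end{itemize}
The density route, in exchange, treats each $n$ by a finite recursion. It is also the template the paper reuses whenever its starting point is an equation for the density of the inverse process, as with \eqref{2.20} and \eqref{temp3}.

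Two remarks on rigor.

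First, your justification for exchanging $\mathcal{D}^f_t$ with $\sum_n u^n$ does not work as written. The operator \eqref{fDt} involves $\partial_t\tilde p_n^{\psi,f}$. Neither $|\tilde p_n^{\psi,f}|\le 1$ nor uniform convergence of $\sum_n u^n\tilde p_n^{\psi,f}(t)$ controls these derivatives or the $t$-derivative of the sum. The clean repair is to compare coefficients after a Laplace transform in $t$:
\begin{itemize}
\item By Fubini, $\mathcal{L}_t[\tilde G^{\psi,f}(u,\cdot)](s)=\sum_n u^n\hat p_n(s)$ with $\hat p_n=\mathcal{L}_t[\tilde p_n^{\psi,f}]$.
\item With $\mathcal{L}[\mathcal{D}^f_t g](s)=f(s)\hat g(s)-f(s)g(0)/s$, equation \eqref{DftG} becomes a power-series identity in $u$.
\item Its $u^n$-coefficient is exactly the Laplace transform of \eqref{Dtfpn}, and uniqueness of the Laplace transform finishes.
\end{itemize}
Here $\mathcal{D}^f_t$ is understood through its Laplace symbol, as the paper permits.

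Second, the convergence problem you anticipate does not arise. $\psi$ is holomorphic on $\{\mathrm{Re}\,z>0\}$, which contains the disc $|z-\Lambda|<\Lambda$. Also $\bigl|\sum_j\lambda_ju^j\bigr|\le\sum_j\lambda_j|u|^j<\Lambda$ for every $|u|<1$. So the Taylor expansion converges absolutely on the whole unit disc, which is what the multinomial regrouping needs. Restricting to small or real $u$ is harmless in any case, since that already determines the coefficients.
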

\noindent{\bf Note about notation.} To avoid complicated notations, we will use in the different sections similar notations for similas objects, which will be valid within a particular section.

\section{Sum of stable subordinators and its inverse and \\corresponding time-changed counting processes}

Consider the following sum
	\begin{equation}\label{2.1}
		\mathcal{H}^\nu(t) = H_1^{2\nu}(t) + (2\lambda)^{\frac{1}{\nu}} H_2^\nu(t), \quad t > 0,\,\,\, 0 < \nu \le \frac{1}{2},
	\end{equation}
	where  $H_1^{2\nu}$, $H_2^\nu$  are independent stable subordinators with parameters ${2\nu}$ and $\nu$ correspondingly, $\lambda > 0$. 
	
	Define the inverse $\mathcal{L}^\nu(t), t > 0$, to the process $\mathcal{H}^\nu(t), t > 0$, as follows
	\begin{equation}\label{1.13}
		\mathcal{L}^\nu(t) = \inf \left\{ s > 0 : H_1^{2\nu}(s) + (2\lambda)^{\frac{1}{\nu}} H_2^\nu(s) \ge t \right\}, \qquad t > 0,
	\end{equation}
	its distribution is related to that of $\mathcal{H}^\nu(t), t > 0$, by means of the formula
	\begin{equation}\label{1.14}
		\Prob\{\mathcal{L}^\nu(t) < x\} = \Prob\{\mathcal{H}^\nu(x) > t\}.
	\end{equation}

\subsection{Equation for the density of the inverse process}

	Let $\ell_\nu(x,t)$ be the probability density of the process of $\mathcal{L}^\nu(t)$, $t > 0$. The following result was stated in \cite{DovOrsToaldo} (see also \cite{OT2017}).
	
\begin{theorem}[\cite{DovOrsToaldo}]\label{densinv}	
	{The density $\ell_\nu(x,t)$ of the process $\mathcal{L}^\nu(t)$, $t > 0$, solves the time-fractional boundary-initial problem}
	\begin{equation}\label{2.20}
		\begin{cases}
			\left( \mathbb{D}_t^{2\nu} + 2\lambda \mathbb{D}_t^{\nu}\right) \ell_\nu(x,t) = -\frac{\partial}{\partial x} \ell_\nu(x,t), \quad x > 0,\,\, t > 0,\,\, 0 < \nu < \frac{1}{2}, \\
			\ell_\nu(x,0) = \delta(x), \\
			\ell_\nu(0,t) = \frac{t^{-2\nu}}{\Gamma(1-2\nu)} + 2\lambda \frac{t^{-\nu}}{\Gamma(1-\nu)},
		\end{cases}
	\end{equation}
	{and has $x$-Laplace transform, for $0 < \gamma < \lambda^2$,}
	\begin{equation}\label{2.21}
		\widetilde{\ell}_\nu(\gamma, t) = \frac{1}{2} \left[ \left( 1 + \frac{\lambda}{\sqrt{\lambda^2 - \gamma}} \right) E_{\nu,1} (r_1 t^\nu) + \left( 1 - \frac{\lambda}{\sqrt{\lambda^2 - \gamma}} \right) E_{\nu,1} (r_2 t^\nu) \right],
	\end{equation}
	{where}
	\begin{equation}\label{2.22}
		r_1 = -\lambda + \sqrt{\lambda^2 - \gamma}, \quad r_2 = -\lambda - \sqrt{\lambda^2 - \gamma}.
	\end{equation}
	{The fractional derivatives appearing in \eqref{2.20} are  in the Riemann-Liouville sense.}
\end{theorem}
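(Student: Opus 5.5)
The plan is to work entirely through Laplace transforms, in $t$ for the governing equation and in $x$ for formula \eqref{2.21}. By independence and \eqref{2.1}, the Laplace exponent of $\mathcal{H}^\nu$ is
\[
f(s)=s^{2\nu}+2\lambda s^{\nu},
\]
since $\E e^{-s(2\lambda)^{1/\nu}H_2^\nu(t)}=e^{-t\,2\lambda s^\nu}$. This is a Bern\v{s}tein function with $a=b=0$, infinite L\'evy measure, and tail
\[
\nu_f(t)=\frac{t^{-2\nu}}{\Gamma(1-2\nu)}+2\lambda\frac{t^{-\nu}}{\Gamma(1-\nu)},
\]
which is absolutely continuous, so Condition I holds. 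The general facts of Section 2 then apply with this $f$. The operator $\mathbb{D}^f_t$ in \eqref{fDDt} has convolution kernel $\nu_f$, which is linear in the two stable pieces. Hence
\[
\mathbb{D}^f_t=\mathbb{D}^{2\nu}_t+2\lambda\mathbb{D}^\nu_t,
\]
both being Riemann--Liouville derivatives. So \eqref{2.20} is exactly \eqref{dlf}--\eqref{dlfin}, including the boundary value $\ell_\nu(0,t)=\nu_f(t)$.

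If one prefers a self-contained check of \eqref{2.20} rather than invoking \cite{T}, I would verify it directly. Relation \eqref{1.14} gives the double transform
\[
\int_0^\infty e^{-rt}\ell_\nu(x,t)\,dt=\frac{f(r)}{r}e^{-xf(r)},
\]
as in \eqref{t-lapl-inv}. Applying the Laplace transform in $t$ to $\mathbb{D}^{2\nu}_t+2\lambda\mathbb{D}^\nu_t$ gives multiplication by $f(r)$, with no initial-value terms, because the R-L kernel convolution vanishes at $0$ for integrable $\ell$. Differentiating $-\frac{f(r)}{r}e^{-xf(r)}$ in $x$ gives the same quantity $\frac{f(r)^2}{r}e^{-xf(r)}$, so the two sides of the equation agree. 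The initial condition follows from $Y(0)=0$, and the boundary value follows from setting $x=0$ in the transform, which gives $f(r)/r$, the transform of $\nu_f$.

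For \eqref{2.21}, I would take the $x$-Laplace transform of the double transform:
\[
\int_0^\infty e^{-rt}\widetilde\ell_\nu(\gamma,t)\,dt=\frac{f(r)}{r\,(f(r)+\gamma)}=\frac{r^{2\nu-1}+2\lambda r^{\nu-1}}{(r^\nu-r_1)(r^\nu-r_2)},
\]
where $r_{1,2}$ are the roots in \eqref{2.22} of $y^2+2\lambda y+\gamma=0$. These roots are real and distinct for $0<\gamma<\lambda^2$. Next I decompose into partial fractions in $y=r^\nu$:
\[
\frac{y^2+2\lambda y}{(y-r_1)(y-r_2)}=1-\frac{\gamma}{(y-r_1)(y-r_2)}.
\]
A cleaner route is to write $\frac{y+2\lambda}{(y-r_1)(y-r_2)}=\frac{A}{y-r_1}+\frac{B}{y-r_2}$. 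Using $r_1-r_2=2\sqrt{\lambda^2-\gamma}$ and $r_i+2\lambda=-r_j$, one gets $A=\frac{r_1+2\lambda}{r_1-r_2}=\frac12\bigl(1+\frac{\lambda}{\sqrt{\lambda^2-\gamma}}\bigr)$, and $B$ is the same with the opposite sign. The total transform is then $r^{\nu-1}\bigl(\frac{A}{r^\nu-r_1}+\frac{B}{r^\nu-r_2}\bigr)$. Inverting with the standard formula $\mathcal{L}[E_{\nu,1}(c t^\nu)](r)=\frac{r^{\nu-1}}{r^\nu-c}$, valid for $r^\nu>|c|$, gives \eqref{2.21}.

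The main obstacle is justification rather than algebra. First, uniqueness of Laplace inversion requires the region of convergence, which is why the condition $\gamma<\lambda^2$ (real roots) is imposed. Second, in the direct verification of \eqref{2.20}, one must justify interchanging the R-L derivative with the transform. This needs the local integrability of $\ell_\nu(x,\cdot)$ and the vanishing of the term $\int_0^t\ell(x,s)\nu_f(t-s)\,ds$ at $t=0$, in the spirit of \eqref{fDtrelation}.
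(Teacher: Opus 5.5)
Your proof is correct. The paper gives no proof of this statement itself; it is quoted from \cite{DovOrsToaldo}. The nearest in-paper material is the Remark that follows it, which obtains the double transform \eqref{eq:DDLT} from \eqref{t-lapl-inv} and inverts it with the bivariate formula \eqref{multMLfLapl} to get \eqref{eq:xLaplace}. There is also the proof of the tempered analogue in Section 4, which solves the doubly transformed boundary problem and matches the result with the double transform of the density.

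Your route differs from that material in two useful ways.
\begin{itemize}
\item For \eqref{2.20}, you note that the tail $\nu_f$ of $f(s)=s^{2\nu}+2\lambda s^{\nu}$ splits additively, so $\mathbb{D}^f_t=\mathbb{D}^{2\nu}_t+2\lambda\mathbb{D}^{\nu}_t$. Hence \eqref{2.20} is literally a special case of \eqref{dlf}--\eqref{dlfin}. The same observation with $f=f_1+f_2$ would give the Section 4 theorem in one line. Your fallback check substitutes the known transform $\frac{f(r)}{r}e^{-xf(r)}$ into the transformed equation. That verifies directly that the density is a solution, rather than computing the transform of a presumed solution and matching.
\item For \eqref{2.21}, you use partial fractions in $y=r^{\nu}$ and the one-parameter pair $r^{\nu-1}/(r^{\nu}-c)$. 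This lands exactly on the stated two-term form. The paper's bivariate inversion gives a less explicit expression, but it does not need $r_1\neq r_2$, so it also covers $\gamma=\lambda^2$.
\end{itemize}

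One correction to your closing paragraph concerns the restriction $0<\gamma<\lambda^2$. It has nothing to do with uniqueness of Laplace inversion or the region of convergence. Lerch's theorem needs neither, and the pair $\mathcal{L}[E_{\nu,1}(ct^{\nu})](r)=r^{\nu-1}/(r^{\nu}-c)$ holds for complex $c$ once $r^{\nu}>|c|$. The restriction only makes $r_1,r_2$ real and distinct, so that your coefficients $A,B$ are finite and real. For $\gamma>\lambda^2$ the same formula holds with complex-conjugate roots.

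A smaller point: \emph{$B$ is the same with the opposite sign} must mean flipping the sign of $\sqrt{\lambda^2-\gamma}$, not $B=-A$. In fact $A+B=1$, consistent with $\widetilde{\ell}_\nu(\gamma,0)=1$.
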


	\begin{remark}
		The $x$-Laplace transform of the density $l_\nu(x,t)$ can be also given in terms of the multivariate generalized Mittag-Leffler function \eqref{multML} as follows:
		
		\begin{equation}\label{eq:xLaplace}
			\tilde{\ell}_{\nu}(\gamma, t) = E^{1,1}_{\nu, 1} (r_1 t^{\nu}, r_2 t^{\nu}) + 2\lambda t^{\nu} E^{1,1}_{\nu, \nu + 1}(r_1 t^{\nu}, r_2 t^{\nu}),
		\end{equation}
		where $r_1$ and $r_2$ are given by \eqref{2.22}.
		
		Indeed, the double Laplace transform of $l_{\nu}(t,x)$ is of the form
		
		\begin{equation}\label{eq:DDLT}
			\tilde{\tilde{\ell}}_{\nu}(\gamma, \mu) = \frac{\mu^{2\nu - 1} + 2\lambda \mu^{\nu - 1}}{\mu^{2\nu} + 2\lambda \mu^{\nu} + \gamma}\, ,
		\end{equation}
		which can be derived from the equation \eqref{DftG}, or, alternatively, by considering the $t$-Laplace transform of the density $\ell_{\nu}(t,x)$ given by   
		\eqref{t-lapl-inv} and then taking the $x$-Laplace transform.
		
		The expression \eqref{eq:DDLT} can be written as
		\begin{equation*}
			\frac{\mu^{2\nu - 1} + 2\lambda \mu^{\nu - 1}}{(\mu^{\nu} - r_1)(\mu^{\nu} - r_2)},
		\end{equation*}
		where $r_1$ and $r_2$ are defined in \eqref{2.22}. Applying the formula 
		\eqref{multMLfLapl} we obtain \eqref{eq:xLaplace}.
	\end{remark}

\begin{remark}
We presented here the equation for the density of  inverse process $\mathcal{L}^\nu(t)$, since it will be used in our study. The density of the process $\mathcal{H}^\nu(t)$ is also satisfies the fractional equation involving the sum of the R-L fractional derivatives,  w.r.t. the space variable (see, for details \cite{OT2017}, \cite{T}).  
\end{remark}

\subsection{Fractional Poisson processes  corresponding to time-change by the inverse to the sum of stable subordinators}

Consider the time-changed process $N^\nu(t)=N(\mathcal{L}^\nu(t))$, where $N$ is the Poisson process with the rate $\Lambda$ and $\mathcal{L}^\nu(t)$ is the inverse process defined in \eqref{1.13},  independent of $N$.

\begin{theorem}\label{fPequ}
	The probabilities ${p}^\nu_k(t)=\Prob\{N(\mathcal{L}^\nu(t))=k\}$ satisfy the following  equation
	\begin{equation}\label{(3.1)-(3.2)}
		\frac{d^{2\nu} p_k^\nu(t)}{dt^{2\nu}} + 2\lambda \frac{d^\nu p_k^\nu(t)}{dt^\nu} = -\Lambda\left(p_k^\nu(t) - p_{k-1}^\nu(t)\right), \quad k \ge 0,
	\end{equation}
	with the standard initial condition and the  fractional derivatives in the C-D sense.
\end{theorem}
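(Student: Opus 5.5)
We write the probabilities by conditioning on the inverse process,
\begin{equation*}
p_k^\nu(t)=\int_0^\infty e^{-\Lambda x}\frac{(\Lambda x)^k}{k!}\,\ell_\nu(x,t)\,dx ,
\end{equation*}
and apply to both sides the operator $\mathbb{D}_t^{2\nu}+2\lambda\mathbb{D}_t^{\nu}$ in the Riemann--Liouville sense. The operator passes under the integral, and Theorem~\ref{densinv} replaces it by $-\partial_x\ell_\nu(x,t)$.

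Next we integrate by parts in $x$. The boundary term at $x=0$ is nonzero only for $k=0$. It equals $\ell_\nu(0,t)=\frac{t^{-2\nu}}{\Gamma(1-2\nu)}+2\lambda\frac{t^{-\nu}}{\Gamma(1-\nu)}$. The term at infinity vanishes because the Poisson weights decay exponentially. Since
\[
\frac{d}{dx}\Big(e^{-\Lambda x}\frac{(\Lambda x)^k}{k!}\Big)=-\Lambda\big(\pi_k(x)-\pi_{k-1}(x)\big),
\]
where $\pi_k$ denotes the Poisson weight and $\pi_{-1}=0$, we obtain
\[
\big(\mathbb{D}_t^{2\nu}+2\lambda\mathbb{D}_t^{\nu}\big)p_k^\nu(t)=-\Lambda\big(p_k^\nu(t)-p_{k-1}^\nu(t)\big)+\delta_{k0}\Big(\frac{t^{-2\nu}}{\Gamma(1-2\nu)}+2\lambda\frac{t^{-\nu}}{\Gamma(1-\nu)}\Big).
\]

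We then convert to Caputo--Djrbashian derivatives via \eqref{fDtrelation}. For $f(x)=x^\alpha$ it reads $\mathbb{D}_t^{\alpha}u=\frac{d^\alpha u}{dt^\alpha}+\frac{t^{-\alpha}}{\Gamma(1-\alpha)}u(0)$. The initial value is $p_k^\nu(0)=\delta_{k0}$, because $\mathcal{L}^\nu(0)=0$. The extra terms produced by the conversion are exactly $\delta_{k0}\big(\frac{t^{-2\nu}}{\Gamma(1-2\nu)}+2\lambda\frac{t^{-\nu}}{\Gamma(1-\nu)}\big)$. They cancel the boundary term, which yields \eqref{(3.1)-(3.2)}.

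As an alternative that avoids the technical issues, we can verify the equation through Laplace transforms. The pgf of $N(\mathcal{L}^\nu(t))$ is $G(u,t)=\tilde\ell_\nu(\Lambda(1-u),t)$. From \eqref{eq:DDLT} its $t$-Laplace transform is
\[
\frac{\mu^{2\nu-1}+2\lambda\mu^{\nu-1}}{\mu^{2\nu}+2\lambda\mu^\nu+\Lambda(1-u)}.
\]
This is equivalent to $(\mu^{2\nu}+2\lambda\mu^\nu)\hat G-(\mu^{2\nu-1}+2\lambda\mu^{\nu-1})=-\Lambda(1-u)\hat G$. That identity is exactly the Laplace image of the C-D equation for $G$, using the Laplace formula for $\mathcal{D}^f_t$ with $f(s)=s^{2\nu}+2\lambda s^\nu$, which is a Bernstein function. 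Expanding in powers of $u$ gives the system for the $p_k^\nu$. Uniqueness of the Laplace transform concludes the argument.

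The main obstacle is justifying the interchange of the fractional operators with the $x$-integral and the vanishing of the boundary terms in the direct approach. For this reason we would present the Laplace-transform argument as the rigorous proof and the direct computation as motivation. The only remaining check is that $|p_k^\nu(t)|\le 1$, which makes the Laplace formula applicable.
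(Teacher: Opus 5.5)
Your direct computation is exactly the paper's proof. It uses the mixture representation, moves the Riemann--Liouville telegraph-type operator under the $x$-integral and replaces it via \eqref{2.20}, and integrates by parts to get the boundary term $\pi_k(0)\ell_\nu(0,t)$. That term then cancels through \eqref{fDtrelation} with $p_k^\nu(0)=\pi_k(0)$. The paper makes the same interchange without justifying it.

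The Laplace route you promote to the rigorous proof is genuinely different. It never uses the density equation \eqref{2.20}, only the Laplace exponent $f(s)=s^{2\nu}+2\lambda s^{\nu}$ of $\mathcal{H}^\nu$ through \eqref{eq:DDLT}. Obtain that formula from \eqref{t-lapl-inv}, not from \eqref{DftG}: \eqref{DftG} with $\psi(x)=x$ and this $f$ is already the pgf equation you are proving. In effect your argument is the one behind Theorem~\ref{thpgfN}, followed by extracting the coefficient of $u^k$ in $(f(\mu)+\Lambda(1-u))\hat G=f(\mu)/\mu$. It buys easy interchanges (Fubini and a power series in $u$ with nonnegative bounded coefficients) and gives the pgf equation at the same time. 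The paper's density-based argument, by contrast, carries over verbatim to the non-homogeneous case \eqref{fDp}. There the right-hand side is not expressible through the $p_k^\nu$, and the pgf is no longer of the form $\tilde\ell_\nu(\gamma,t)$.

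One correction: boundedness is not the only remaining check. The Laplace formula for $\mathcal{D}^f_t$ holds for functions in its domain, which by \eqref{fDt} means absolutely continuous ones. Without that, you have proved the equation only in the Laplace-transform sense. That is acceptable if you define the C--D derivative through its Laplace transform, as the paper permits. Absolute continuity is easy here. Since $|\pi_k'|\le\Lambda$ and $\mathcal{L}^\nu$ is nondecreasing, $|p_k^\nu(t)-p_k^\nu(s)|\le\Lambda\bigl(U(t)-U(s)\bigr)$ for $s<t$, where $U(t)=\E\mathcal{L}^\nu(t)$. $U$ has Laplace transform $1/(sf(s))$, hence the locally integrable density $t^{2\nu-1}E_{\nu,2\nu}(-2\lambda t^{\nu})$. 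The resulting bound $|(p_k^\nu)'|\le\Lambda U'$ also makes the convolution in \eqref{fDt} Laplace-transformable.
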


\begin{proof}
	Equation \eqref{(3.1)-(3.2)} can be stated, in particular, in the similar way as the general result for the Poisson process time-changed by an inverse subordinators (see, e.g. \cite{BS2019}, \cite{BS2024}), as soon as we have the equations for their densities, which are provided in this case by \eqref{2.20}.
	
	For the probabilities $p_k^\nu(t)$ we have:
	\begin{align}
	p_k^\nu(t)&=\Prob\left\{N\left(\mathcal{L}^\nu(t)\right)=k\right\}
	=\int_{0}^{\infty}p_k(u)\ell_\nu(t,u)du,\nonumber
	\quad k=0,1,2,\dots
	\end{align}
	In view of equations \eqref{2.20} for the density $\ell_\nu(t,u)$ of the inverse subordinator $\mathcal{L}^\nu(t)$, applying the R-L derivatives, we obtain:
	\begin{align}
	\left( \mathbb{D}_t^{2\nu} + 2\lambda \mathbb{D}_t^{\nu}\right) p_k^\nu(t)
	&=\int_{0}^{\infty}p_k(u)\left( \mathbb{D}_t^{2\nu} + 2\lambda \mathbb{D}_t^{\nu}\right)\ell_\nu(t,u)du=- \int_{0}^{\infty}p_k(u)\frac{\partial}{\partial  u}\ell_\nu(t,u)du\nonumber\\
	&=\int_{0}^{\infty}\ell_\nu(t,u)\frac{\partial}{\partial  u}p_k(u)du-p_k(u)\ell_\nu(t,u)\big|_{u=0}^\infty\nonumber\\
	&=\int_{0}^{\infty}\ell_\nu(t,u)(-\Lambda[p_k(u)-p_{k-1}(u)])du+p_k(0)\ell_\nu(t,0)\nonumber\\
	&=-\Lambda\left[p^\nu_k(t)-p^\nu_{k-1}(t)\right]+p_k(0)\Big[\frac{t^{-2\nu}}{\Gamma(1-2\nu)} + 2\lambda \frac{t^{-\nu}}{\Gamma(1-\nu)}  \Big]. \label{11}
	\end{align}
	Using the relation \eqref{fDtrelation}
	    between the derivatives of C-D and R-L types, we have:
	\begin{equation}\label{12}
	\frac{d^{2\nu} p_k^\nu(t)}{dt^{2\nu}} + 2\lambda \frac{d^\nu p_k^\nu(t)}{dt^\nu} = \left( \mathbb{D}_t^{2\nu} + 2\lambda \mathbb{D}_t^{\nu}\right) p_k^\nu(t)-\Big[\frac{t^{-2\nu}}{\Gamma(1-2\nu)} + 2\lambda \frac{t^{-\nu}}{\Gamma(1-\nu)}  \Big]p_k^\nu(0),
	\end{equation}
	 note also that
	\begin{equation}\label{13}
	p_k^\nu(0)=\int_{0}^{\infty}p_k(u)\ell_\nu(0,u)du=\int_{0}^{\infty}p_k(u)\delta(u)du=p_k(0).
	\end{equation}
	From \eqref{11}, taking into account \eqref{12}-\eqref{13}, we  obtain \eqref{(3.1)-(3.2)}.
\end{proof}

To write the expressions for the probabilities ${p}_k^\nu(t)$ we will distiguish two cases. 

\medskip

For the case $\Lambda=\lambda^2$ in equation  \eqref{(3.1)-(3.2)}, 
 the expression for the probabilities ${p}_k^\nu(t)$  where calculated in \cite{BO2010} in terms of generalized Mittag-Leffler function \eqref{ML3} as presented in the next theorem.

\begin{theorem}[\cite{BO2010}]\label{fP2}
	The solution ${p}_k^\nu(t)$, for $k = 0, 1, \dots$ and $t \ge 0$, of \eqref{(3.1)-(3.2)}, with $\Lambda=\lambda^2$, is given by
	\begin{equation}\label{34}
		{p}_k^\nu(t) = \lambda^{2k} t^{2k\nu} E_{\nu, 2k\nu+1}^{2k+1}(-\lambda t^\nu) + \lambda^{2k+1} t^{(2k+1)\nu} E_{\nu, (2k+1)\nu+1}^{2k+2}(-\lambda t^\nu).
	\end{equation}
\end{theorem}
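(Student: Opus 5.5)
Since $p_k^\nu(t)=\Prob\{N(\mathcal{L}^\nu(t))=k\}$, I would work with the probability generating function and Laplace transforms in $t$. By conditioning on $\mathcal{L}^\nu(t)$, the generating function is
\[
G(u,t)=\E u^{N(\mathcal{L}^\nu(t))}=\tilde\ell_\nu\bigl(\Lambda(1-u),t\bigr),
\]
where $\tilde\ell_\nu$ is the $x$-Laplace transform of Theorem~\ref{densinv}. Taking the $t$-Laplace transform and using \eqref{eq:DDLT} with $\gamma=\Lambda(1-u)$ gives
\[
\int_0^\infty e^{-\mu t}G(u,t)\,dt=\frac{\mu^{2\nu-1}+2\lambda\mu^{\nu-1}}{\mu^{2\nu}+2\lambda\mu^\nu+\Lambda(1-u)}.
\]
The same formula also follows directly from \eqref{(3.1)-(3.2)}: multiply by $u^k$, sum over $k$, and use the Laplace rule for C-D derivatives with $G(u,0)=1$. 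Either route is fine.

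Now set $\Lambda=\lambda^2$. The denominator becomes $(\mu^\nu+\lambda)^2-\lambda^2u$. Write the numerator as $\mu^{-1}\bigl[(\mu^\nu+\lambda)^2-\lambda^2\bigr]$. Expanding in a geometric series in $u$ (valid for $\mu$ large), the coefficient of $u^k$ is
\[
\hat p_k(\mu)=\frac{\mu^{-1}\bigl[(\mu^\nu+\lambda)^2-\lambda^2\bigr]\lambda^{2k}}{(\mu^\nu+\lambda)^{2k+2}}
=\frac{\lambda^{2k}\mu^{-1}}{(\mu^\nu+\lambda)^{2k}}-\frac{\lambda^{2k+2}\mu^{-1}}{(\mu^\nu+\lambda)^{2k+2}}.
\]
Next I compare this with the target. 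The known Laplace pair is
\[
\mathcal{L}\bigl[t^{\beta-1}E^{\gamma}_{\nu,\beta}(-\lambda t^\nu)\bigr](\mu)=\frac{\mu^{\nu\gamma-\beta}}{(\mu^\nu+\lambda)^\gamma},
\]
which is \eqref{multMLfLapl} with $M=1$. With it, the right-hand side of \eqref{34} transforms to
\[
\frac{\lambda^{2k}\mu^{\nu-1}}{(\mu^\nu+\lambda)^{2k+1}}+\frac{\lambda^{2k+1}\mu^{\nu-1}}{(\mu^\nu+\lambda)^{2k+2}}
=\frac{\lambda^{2k}\mu^{\nu-1}(\mu^\nu+2\lambda)}{(\mu^\nu+\lambda)^{2k+2}}.
\]
Since $\mu^{\nu-1}(\mu^\nu+2\lambda)=\mu^{-1}\bigl[(\mu^\nu+\lambda)^2-\lambda^2\bigr]$, this equals $\hat p_k(\mu)$. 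Uniqueness of the Laplace transform then gives \eqref{34}.

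The only real obstacle is bookkeeping. First, the interchange of the $u$-series with the Laplace transform must be justified, by absolute convergence for $|u|<1$ and large $\mu$. Second, the generalized Mittag-Leffler Laplace formula requires $|\lambda/\mu^\nu|<1$, which holds for large $\mu$ and extends by analytic continuation. Finally, if one wants $p_k^\nu$ to be the solution of \eqref{(3.1)-(3.2)} rather than just the distribution of $N(\mathcal{L}^\nu(t))$, uniqueness follows because the transformed system is a recursive set of algebraic equations for $\hat p_k(\mu)$.
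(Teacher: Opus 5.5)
Your proposal is correct and is essentially the argument the paper relies on (from Beghin--Orsingher, and reproduced in the proof of Theorem~\ref{fP3}). Both take the Laplace transform in $t$, arrive at $\hat p_k(\mu)=\lambda^{2k}\mu^{\nu-1}(\mu^\nu+2\lambda)/(\mu^\nu+\lambda)^{2k+2}$, split it as $\lambda^{2k}\mu^{\nu-1}/(\mu^\nu+\lambda)^{2k+1}+\lambda^{2k+1}\mu^{\nu-1}/(\mu^\nu+\lambda)^{2k+2}$, and invert with the three-parameter Mittag-Leffler Laplace pair. The only difference is cosmetic: you read off $\hat p_k$ as the $u^k$-coefficient of the transformed generating function, whereas the paper solves the transformed recursion $\hat p_k=\lambda^2\hat p_{k-1}/(\mu^\nu+\lambda)^2$ directly, which is the same computation.
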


\medskip
The case  $\Lambda<\lambda^2$ is treated in the theorem below.

\begin{theorem}\label{fP3}
	The solution ${p}_k^\nu(t)$, for $k = 0, 1, \dots$ and $t \ge 0$, of \eqref{(3.1)-(3.2)}, with $\Lambda<\lambda^2$, is given by
	\begin{equation}\label{34.2}
		{p}_k^\nu(t) = \lambda^{2k} t^{2k\nu} E_{\nu, 2k\nu+1}^{k+1, k+1}(p_1 t^\nu, p_2 t^\nu) + \lambda^{2k+1} t^{(2k+1)\nu} E_{\nu, (2k+1)\nu+1}^{k+1, k+1}(p_1 t^\nu, p_2 t^\nu),
	\end{equation}
	where
	\begin{equation}\label{pp}
	p_1 = -\lambda + \sqrt{\lambda^2 - \Lambda}, \quad p_2 = -\lambda - \sqrt{\lambda^2 - \Lambda},
	\end{equation}
	and the multivariate Mittag-Leffler function is defined in \eqref{multML}.
\end{theorem}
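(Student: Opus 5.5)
The plan is to work entirely in the Laplace domain and then invert term by term with formula \eqref{multMLfLapl}. This mirrors the computation in the Remark following Theorem \ref{densinv}.

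\emph{Step 1: transform the equation.} Apply the $t$-Laplace transform to \eqref{(3.1)-(3.2)}. For the C-D derivatives we use $\mathcal{L}[\frac{d^{\alpha}u}{dt^{\alpha}}](\mu)=\mu^{\alpha}\mathcal{L}[u](\mu)-\mu^{\alpha-1}u(0)$, which is the rule for $\mathcal{D}^f_t$ with $f(x)=x^\alpha$. With $\hat p_k(\mu)=\mathcal{L}[p_k^\nu](\mu)$ and $p_k^\nu(0)=\delta_{k0}$, the system becomes the recursion
\begin{equation*}
(\mu^{2\nu}+2\lambda\mu^\nu+\Lambda)\hat p_k(\mu)=\Lambda\hat p_{k-1}(\mu)+\delta_{k0}\,(\mu^{2\nu-1}+2\lambda\mu^{\nu-1}).
\end{equation*}
Iterating it gives the closed form
\begin{equation*}
\hat p_k(\mu)=\frac{\Lambda^k(\mu^{2\nu-1}+2\lambda\mu^{\nu-1})}{(\mu^{2\nu}+2\lambda\mu^\nu+\Lambda)^{k+1}}.
\end{equation*}
As a cross-check, the same expression follows from the representation $p_k^\nu(t)=\int_0^\infty p_k(u)\ell_\nu(t,u)\,du$. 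In that route the $t$-transform is $\frac{\Lambda^k}{k!}(-\partial_\gamma)^k\tilde{\tilde\ell}_\nu(\gamma,\mu)\big|_{\gamma=\Lambda}$, computed from \eqref{eq:DDLT}.

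\emph{Step 2: factor the denominator.} Since $\Lambda<\lambda^2$, the quadratic $z^2+2\lambda z+\Lambda$ has the two distinct real roots $p_1,p_2$ of \eqref{pp}. Hence
\begin{equation*}
(\mu^{2\nu}+2\lambda\mu^\nu+\Lambda)^{k+1}=(\mu^\nu-p_1)^{k+1}(\mu^\nu-p_2)^{k+1}.
\end{equation*}
Both roots are negative, so $|p_j/\mu^\nu|<1$ holds for $\mu$ large enough, which is what \eqref{multMLfLapl} requires.

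\emph{Step 3: invert with \eqref{multMLfLapl}.} Take $M=2$ and $\gamma_1=\gamma_2=k+1$, so $\nu\sum_j\gamma_j=(2k+2)\nu$.
\begin{itemize}
\item For the term $\mu^{2\nu-1}$ in the numerator we need $(2k+2)\nu-\delta=2\nu-1$, that is $\delta=2k\nu+1$. This yields $t^{2k\nu}E^{k+1,k+1}_{\nu,2k\nu+1}(p_1t^\nu,p_2t^\nu)$.
\item For the term $\mu^{\nu-1}$ we need $\delta=(2k+1)\nu+1$. This yields $t^{(2k+1)\nu}E^{k+1,k+1}_{\nu,(2k+1)\nu+1}(p_1t^\nu,p_2t^\nu)$.
\end{itemize}
These are exactly the two Mittag-Leffler structures in \eqref{34.2}. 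Uniqueness of the Laplace transform, and continuity of both sides in $t$, then identify $p_k^\nu(t)$.

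\emph{Main obstacle: the constant prefactors.} The computation above gives the prefactors $\Lambda^k$ and $2\lambda\Lambda^k$. The statement \eqref{34.2} instead has $\lambda^{2k}$ and $\lambda^{2k+1}$, which are the values carried over from the boundary case $\Lambda=\lambda^2$ of Theorem \ref{fP2}. At this point I would reconcile the normalisation first. I would check whether \eqref{34.2} is meant with $\lambda^{2}$ read as $\Lambda$ and with the coefficient $2$ absorbed into the second term. To test this, I would let $\Lambda\uparrow\lambda^2$, so that $p_1,p_2\to-\lambda$, and compare the result with \eqref{34} of Theorem \ref{fP2}. If that comparison does not settle the discrepancy, then what Steps 1--3 actually establish is \eqref{34.2} with $\lambda^{2k}$ replaced by $\Lambda^k$ and $\lambda^{2k+1}$ replaced by $2\lambda\Lambda^k$. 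In that case the statement as printed would need correcting rather than proving.

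The remaining technical points are routine. One is justifying the Laplace transform of the C-D derivatives, which only needs $p_k^\nu$ bounded. The other is the convergence of the double series defining $E^{k+1,k+1}_{\nu,\beta}$, which is entire in both arguments.
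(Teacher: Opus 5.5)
Your argument follows the paper's own route: Laplace-transform \eqref{(3.1)-(3.2)}, factor the denominator as $(\mu^\nu-p_1)^{k+1}(\mu^\nu-p_2)^{k+1}$, and invert each numerator term with \eqref{multMLfLapl} at $\gamma_1=\gamma_2=k+1$. It is correct, and the discrepancy you flag is an error in the paper, not in your computation.

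The paper's proof writes the transform as $(\lambda^{2k}s^{2\nu-1}+2\lambda^{2k+1}s^{\nu})/(s^{2\nu}+2\lambda s^\nu+\Lambda)^{k+1}$. This silently sets $\Lambda=\lambda^2$, which the hypothesis excludes. It also has $s^\nu$ where $s^{\nu-1}$ is meant, and it then drops the factor $2$ on inversion. What the method actually proves is your corrected formula
\[
p_k^\nu(t)=\Lambda^k t^{2k\nu}E^{k+1,k+1}_{\nu,2k\nu+1}(p_1t^\nu,p_2t^\nu)+2\lambda\Lambda^k t^{(2k+1)\nu}E^{k+1,k+1}_{\nu,(2k+1)\nu+1}(p_1t^\nu,p_2t^\nu).
\]

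Your proposed limit test does settle the question, because $E^{k+1,k+1}_{\nu,\beta}(z,z)=E^{2k+2}_{\nu,\beta}(z)$ by the Vandermonde identity for Pochhammer symbols. As $\Lambda\uparrow\lambda^2$, the corrected formula has transform $\lambda^{2k}(s^{2\nu-1}+2\lambda s^{\nu-1})/(s^\nu+\lambda)^{2k+2}$. This is the same as the transform of \eqref{34}. The printed \eqref{34.2} instead gives $\lambda^{2k}(s^{2\nu-1}+\lambda s^{\nu-1})/(s^\nu+\lambda)^{2k+2}$.

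For $k=0$, the corrected formula also agrees with the paper's \eqref{eq:xLaplace}, since $p_0^\nu(t)=\tilde\ell_\nu(\Lambda,t)$. So you should state the theorem with the prefactors $\Lambda^k$ and $2\lambda\Lambda^k$ rather than try to prove \eqref{34.2} as printed.
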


	\begin{proof}
		Analogously to the proof of Theorem \ref{fP2} (see \cite{BO2010}), we perform the Laplace transform of the equation \eqref{(3.1)-(3.2)} and take into account the initial condition $p^{\nu}_k(0)$ for $k=0$, $p_k(0)=0$ for $k \geq 1$. We obtain the following expression for the Laplace transform of the solution:
		
		\begin{equation}\label{eq:fP3_Laplace}
			\mathcal{L}(p^{\nu}_{k}(t))(s) = \frac{\lambda^{2k} s^{2\nu - 1} + 2\lambda^{2k + 1}s^{\nu}}{(s^{2\nu} + 2\lambda s^{\nu} + \Lambda)^{k+1}} = \frac{\lambda^2k s^{2\nu - 1} + 2\lambda^{2k+1}s^{\nu}}{(s^{\nu} - p_1)^{k+1} (s^{\nu} - p_2)^{k+1}},   
		\end{equation}
with $p_1$ and $p_2$ defined in \eqref{pp}.		
				Then we invert \eqref{eq:fP3_Laplace} by using \eqref{multML} and come to the expression \eqref{34.2}.
	\end{proof}

\bigskip


We present in the next theorem the probability generating function of the process $N(\mathcal{L}^\nu(t)$ and its governing equation.

\begin{theorem}\label{thpgfN}
	The probability generating function $
	{G}_\nu(u, t) = \sum_{k=0}^{\infty} u^k {p}_k^\nu(t)$, $|u| \leq 1$, solves the following fractional differential equation
	\begin{equation}\label{pgfNequ}
		\frac{\partial^{2\nu} G_\nu(u, t)}{\partial t^{2\nu}} + 2\lambda \frac{\partial^\nu G_\nu(u, t)}{\partial t^\nu} = \Lambda(u - 1)G_\nu(u, t), 
	\end{equation}
	subject to the initial condition $G(u, 0) = 1$, and is given as
	\begin{equation}\label{pgfN}
		G_\nu(u, t) = \tilde \ell_\nu (\Lambda(u - 1), t), \,\,u < 1,
	\end{equation}
	where the expression for $\tilde \ell_\nu$ is given by \eqref{2.21}.
	In particular, for the case $\Lambda=\lambda^2$, \eqref{pgfN} becomes
	\begin{equation}\label{pgfN2}
		{G}_\nu(u, t) = \frac{\sqrt{u} + 1}{2\sqrt{u}} E_{\nu, 1}(-\lambda(1 - \sqrt{u})t^\nu) + \frac{\sqrt{u} - 1}{2\sqrt{u}} E_{\nu, 1}(-\lambda(1 + \sqrt{u})t^\nu).
	\end{equation}
\end{theorem}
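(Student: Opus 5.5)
The plan is to use the subordination representation. Conditioning on the independent inverse process $\mathcal{L}^\nu(t)$ gives
\[
G_\nu(u,t)=\sum_{k\ge0}u^k\int_0^\infty p_k(x)\ell_\nu(x,t)\,dx=\int_0^\infty e^{-\Lambda(1-u)x}\ell_\nu(x,t)\,dx,
\]
since the Poisson process has $\E u^{N(x)}=e^{-\Lambda(1-u)x}$. For $|u|\le1$ the interchange of sum and integral is justified by dominated convergence. The right-hand side is exactly the $x$-Laplace transform of $\ell_\nu$ evaluated at $\gamma=\Lambda(1-u)$, which gives \eqref{pgfN}. Strictly, Theorem \ref{densinv} gives \eqref{2.21} for $0<\gamma<\lambda^2$; for other $u<1$ (e.g.\ $u<0$ with $\gamma\ge \lambda^2$) I will extend \eqref{2.21} by analytic continuation in $\gamma$. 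This is legitimate because $\E e^{-\gamma\mathcal{L}^\nu(t)}$ is analytic for $\mathrm{Re}\,\gamma>0$, and \eqref{2.21} is an entire function of $\sqrt{\lambda^2-\gamma}$ that is even in the square root. Note that the sign in \eqref{pgfN} is written as $\tilde\ell_\nu(\Lambda(u-1),t)$; I will read it consistently with $\gamma=\Lambda(1-u)$, which is what matches \eqref{2.21} and the special case below.

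For the equation \eqref{pgfNequ}, I will multiply \eqref{(3.1)-(3.2)} by $u^k$ and sum over $k\ge0$, using $p_{-1}^\nu=0$. The right side becomes $-\Lambda G_\nu+\Lambda uG_\nu=\Lambda(u-1)G_\nu$. Termwise application of the C-D derivatives needs justification, and the cleanest way to get it is via Laplace transforms in $t$.

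The alternative route avoids this issue entirely. The double Laplace transform \eqref{eq:DDLT} with $\gamma=\Lambda(1-u)$ gives
\[
\mathcal{L}[G_\nu](\mu)=\frac{\mu^{2\nu-1}+2\lambda\mu^{\nu-1}}{\mu^{2\nu}+2\lambda\mu^\nu+\Lambda(1-u)}.
\]
This is exactly the Laplace transform of the solution of \eqref{pgfNequ} with $G(u,0)=1$, by the rule $\mathcal{L}[d^\alpha g/dt^\alpha]=\mu^\alpha\mathcal{L}[g]-\mu^{\alpha-1}g(0)$. Uniqueness of the Laplace transform then yields the equation. The initial condition holds since $\ell_\nu(x,0)=\delta(x)$.

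For the special case $\Lambda=\lambda^2$, substitute $\gamma=\lambda^2(1-u)$, so that $\sqrt{\lambda^2-\gamma}=\lambda\sqrt u$ (taking $0<u<1$ first and then continuing). This gives $r_1=-\lambda(1-\sqrt u)$, $r_2=-\lambda(1+\sqrt u)$, and coefficients $\tfrac12(1\pm 1/\sqrt u)=(\sqrt u\pm1)/(2\sqrt u)$. This yields \eqref{pgfN2} directly.

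The main obstacle is the rigorous identification for the full range of $u$, i.e.\ the analytic continuation past the restriction $\gamma<\lambda^2$ and the behaviour at $u=0$. At $u=0$, \eqref{pgfN2} has a removable singularity, since the expression is even in $\sqrt u$ and hence a power series in $u$. I would handle this by noting that both sides are analytic in $u$ on the unit disc and agree on $(0,1)$, so they agree everywhere.
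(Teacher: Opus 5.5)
Your proposal is correct and follows essentially the paper's route: the paper specializes Theorem~\ref{Th5}, with $\psi$ the identity and $f(s)=s^{2\nu}+2\lambda s^{\nu}$ (so that $\mathcal{D}^f_t$ is the operator on the left of \eqref{pgfNequ}), and then inserts \eqref{2.21}, whereas you carry out that specialization by hand, conditioning on $\mathcal{L}^\nu(t)$ and using the double Laplace transform \eqref{eq:DDLT}. Your reading of the argument as $\Lambda(1-u)$ is right: it is what \eqref{Gpsif1} gives, and it is the only choice that reproduces \eqref{pgfN2}. Your analytic-continuation remark also covers the range $\gamma\ge\lambda^2$, which the paper passes over silently.
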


\begin{proof}
The result follows from the general Theorem \ref{Th5}, by using formulas \eqref{Gpsif1} and \eqref{DftG}, where the expression for the Laplace transform of the inverse subordinator is provided in our case by Theorem \ref{densinv}, formula \eqref{2.21}, from which for the particular case $\Lambda=\lambda^2$ the  expression \eqref{pgfN2}	 can be calculated. 
Note that for the  case  $\Lambda=\lambda^2$ equation \eqref{pgfNequ} and the expression \eqref{pgfN2} were also derived in \cite{BO2010}, within a different approach. 
\end{proof}

\begin{remark}
Note that to represent the probability generating function $G_\nu(u, t)$ we can also use other expression for $\tilde \ell_\nu$ which is  given by \eqref{eq:xLaplace}. 
\end{remark}

The case of nonhomogeneous Poisson process can be treated similarly  to the paper \cite{BS2019}. 

Let  $\mathsf{N}(t), t\geq0$, be a non-homogeneous Poisson process with intensity function $\lambda(t):[0,\infty)\to[0,\infty)$. Denote its marginal distribution $\mathsf p_n(u)$, and 
$\Lambda(t)=\Lambda(0,t)$. Consider the time-changed process $\mathsf{N}^\nu(t)=\mathsf{N}(\mathcal{L}^\nu(t))$, where $\mathcal{L}^\nu(t)$ is the inverse process defined in \eqref{1.13},  indepedent of $\mathsf{N}$. Then we have the marginal distributions
$$
   \mathsf p_n^\nu(t)=\Prob\left\{\mathsf{N}\left(\mathcal{L}^\nu(t)\right)=n\right\}
    =\int_{0}^{\infty}\mathsf p_n(u)\ell_\nu(t,u)du
    =\int_{0}^{\infty}\frac{e^{-\Lambda(u)}\Lambda(u)^n}{n!}\ell_\nu(t,u)du, \, n=0,1,2,\dots,
$$
where $\ell_\nu(t,u)$ is the density of the process $\mathcal{L}^\nu(t)$. The next theorem  presents the governing equations for  $\mathsf p_n^\nu(t)$, the proof   follows by the same arguments as those for Theorem 2 in \cite{BS2019}. 
\begin{theorem}
     The marginal distributions $\mathsf p_n^\nu(t)=\Prob\left\{\mathsf{N}\left(\mathcal{L}^\nu(t)\right)=n\right\}$, $n=0,1,\dots,$ satisfy the differential-integral equations
    \begin{equation}\label{fDp}
    \frac{d^{2\nu} \mathsf p_k^\nu(t)}{dt^{2\nu}} + 2\lambda \frac{d^\nu \mathsf p_k^\nu(t)}{dt^\nu}=\int_{0}^{\infty}\lambda(u)\left[-\mathsf p_k(u)+ \mathsf p_{k-1}(u)\right]\ell_\nu(t,u)du,
    \end{equation}
    with the usual initial condition
    $
    \mathsf p^\nu_k(0)=    
    1$, for $k=0$, $
    \mathsf p^\nu_k(0)=
    0$, for $k\geq1$,
    $\mathsf p_{-1}^\nu(0)=0$, where the derivatives are in the  C-D sense,  $\ell_\nu(t,u)$ is the density of the inverse subordinator $\mathcal{L}^\nu(t)$.
\end{theorem}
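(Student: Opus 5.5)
The plan is to follow the proof of Theorem \ref{fPequ} almost verbatim, with the only change that the Kolmogorov equation of the non-homogeneous Poisson process now has a time-dependent rate. We start from the representation
\[
\mathsf p_k^\nu(t)=\int_0^\infty \mathsf p_k(u)\,\ell_\nu(t,u)\,du .
\]
We then apply the Riemann--Liouville operator $\mathbb{D}_t^{2\nu}+2\lambda\mathbb{D}_t^{\nu}$ under the integral sign. The first step is to justify this interchange, using that $0\le \mathsf p_k(u)\le 1$ and the convolution structure of $\mathbb{D}_t^{f}$ with $f(s)=s^{2\nu}+2\lambda s^\nu$. Then Theorem \ref{densinv}, equation \eqref{2.20}, turns the right-hand side into
\[
-\int_0^\infty \mathsf p_k(u)\,\partial_u\ell_\nu(t,u)\,du .
\]

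Next we integrate by parts in $u$. The boundary term at infinity vanishes because $\ell_\nu(t,u)\to0$ as $u\to\infty$ for fixed $t>0$; this is the step needing some care, and we would argue it via integrability of $\ell_\nu(t,\cdot)$ together with monotone decay, or at least along a subsequence. The boundary term at $u=0$ is
\[
\mathsf p_k(0)\,\ell_\nu(t,0)=\mathsf p_k(0)\Big[\tfrac{t^{-2\nu}}{\Gamma(1-2\nu)}+2\lambda\tfrac{t^{-\nu}}{\Gamma(1-\nu)}\Big],
\]
which we obtain from the boundary condition in \eqref{2.20}.

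In the remaining integral we insert the forward equation of the non-homogeneous Poisson process,
\[
\frac{d}{du}\mathsf p_k(u)=\lambda(u)\big[-\mathsf p_k(u)+\mathsf p_{k-1}(u)\big],
\]
with $\mathsf p_{-1}\equiv 0$. This follows directly by differentiating $e^{-\Lambda(u)}\Lambda(u)^k/k!$ and using $\Lambda'(u)=\lambda(u)$. Unlike the homogeneous case, the factor $\lambda(u)$ cannot be pulled out of the integral, which is exactly why the right-hand side of \eqref{fDp} stays in integral form.

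Finally, we pass from R-L to C-D derivatives via \eqref{fDtrelation}, applied termwise to $2\nu$ and $\nu$. The tail function is $\nu_f(t)=\tfrac{t^{-2\nu}}{\Gamma(1-2\nu)}+2\lambda\tfrac{t^{-\nu}}{\Gamma(1-\nu)}$, so the conversion subtracts $\nu_f(t)\,\mathsf p_k^\nu(0)$. Since $\ell_\nu(0,u)=\delta(u)$, we have $\mathsf p_k^\nu(0)=\mathsf p_k(0)$, and the two boundary contributions cancel. This yields \eqref{fDp} together with the stated initial conditions, because $\mathsf p_k(0)=\delta_{k0}$ as $\Lambda(0)=0$.

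The main obstacle is analytic rather than algebraic: justifying the interchange of the fractional operators with the $u$-integral and the vanishing of the boundary term at infinity, under mild assumptions on $\lambda(\cdot)$ such as local integrability. We would handle this as in Theorem 2 of \cite{BS2019}, working at the level of Laplace transforms in $t$ via \eqref{t-lapl-inv}, where all operations reduce to multiplication by $f(r)$ and Fubini applies directly.
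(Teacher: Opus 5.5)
Your proposal is correct and follows essentially the same route as the paper, which defers to the argument of Theorem 2 in \cite{BS2019}: the proof of Theorem \ref{fPequ}, but with the rate $\lambda(u)$ kept inside the $u$-integral. That is, you apply the R-L operator under the integral, use \eqref{2.20}, integrate by parts, and cancel the boundary term $\mathsf p_k(0)\ell_\nu(t,0)$ against the R-L/C-D correction from \eqref{fDtrelation}; your Laplace-transform remark for justifying the interchanges is a sound supplement that the paper does not spell out.
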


\subsection{Generalized fractional counting processes  corresponding to time-change by the inverse to the sum of stable subordinators}

Consider the time-changed process $M^\nu(t)=M(\mathcal{L}^\nu(t))$, where $M$ is the generalized counting process with parameters $\lambda_1, \ldots, \lambda_k$, and $\mathcal{L}^\nu(t)$ is the inverse process defined in \eqref{1.13},   independent of $M$.

We have the following result for the probabilities ${\tilde p}_n^\nu(t)=\Prob\{M(\mathcal{L}^\nu(t))=n\}$.

\begin{theorem}\label{fM2}
The probabilities ${\tilde p}_n^\nu(t)$
satisfy the following  equation
\begin{equation}\label{fM2eq}
	\frac{d^{2\nu} \tilde p_n^\nu}{dt^{2\nu}} + 2\lambda \frac{d^\nu \tilde p_n^\nu}{dt^\nu} = - \Lambda \tilde p_n(t) + \sum_{j=1}^{\min\{n,k\}}\lambda_j \tilde p_{n-j}(t), \,\, n\geq0,
\end{equation}
with the standard initial condition and fractional derivatives in C-D sense.

	The solution ${\tilde p}_n^\nu(t)$, for $n = 0, 1, \dots$ and $t \ge 0$, of \eqref{fM2eq} is given as follows:
	\begin{description}
	\item $\quad$if $\lambda=\Lambda^{1/2}$,
	\begin{align}\label{pGFCP}
		\tilde p_n^\nu(t) &= \sum_{r=0}^n \sum_{\substack{\alpha_1+\alpha_2+\dots+\alpha_k=r \\ \alpha_1+2\alpha_2+\dots+k\alpha_k=n}} \binom{r}{\alpha_1, \alpha_2, \dots, \alpha_k} \lambda_1^{\alpha_1} \lambda_2^{\alpha_2} \dots \lambda_k^{\alpha_k} \nonumber \\
				&\times  \left(t^{2r\nu} E_{\nu, 2r\nu+1}^{2r+1}(-\Lambda^{1/2} t^\nu) + \Lambda^{1/2} t^{(2r+1)\nu} E_{\nu, (2r+1)\nu+1}^{2r+2}(-\Lambda^{1/2} t^\nu)\right),
	\end{align}
	where the generalized Mittag-Leffler function is defined in \eqref{ML3};
		\item $\quad$if $\lambda>\Lambda^{1/2}$,
		\begin{align}\label{pGFCP2}
		\tilde p_n^\nu(t) &= \sum_{r=0}^n \sum_{\substack{\alpha_1+\alpha_2+\dots+\alpha_k=r \\ \alpha_1+2\alpha_2+\dots+k\alpha_k=n}} \binom{r}{\alpha_1, \alpha_2, \dots, \alpha_k} \lambda_1^{\alpha_1} \lambda_2^{\alpha_2} \dots \lambda_k^{\alpha_k} \nonumber \\
				&\times  \left(\lambda^{2k} t^{2k\nu} E_{\nu, 2k\nu+1}^{k+1, k+1}(p_1 t^\nu, p_2 t^\nu) + \lambda^{2k+1} t^{(2k+1)\nu} E_{\nu, (2k+1)\nu+1}^{k+1, k+1}(p_1 t^\nu, p_2 t^\nu)\right),
	\end{align}		
		where
		$p_1$ and  $p_2$ are given in \eqref{pp}
		and the multivariate Mittag-Leffler function is defined in \eqref{multML}.
	\end{description}
	
\end{theorem}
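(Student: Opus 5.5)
Two things have to be shown: the governing equation \eqref{fM2eq}, and the explicit form of the solution in the two cases. For both I would follow the scheme already used for Theorems \ref{fPequ}, \ref{fP2} and \ref{fP3}.

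\emph{Step 1: the equation.} Conditioning on $\mathcal{L}^\nu(t)$ gives
\[
\tilde p_n^\nu(t)=\int_0^\infty \tilde p_n(u)\,\ell_\nu(t,u)\,du .
\]
I then apply the Riemann--Liouville operator $\mathbb{D}_t^{2\nu}+2\lambda\mathbb{D}_t^{\nu}$ under the integral and use \eqref{2.20} to replace it by $-\partial_u$. Integrating by parts in $u$ turns the integrand into $\partial_u\tilde p_n(u)$, which by \eqref{pGCP} equals $-\Lambda\tilde p_n(u)+\sum_{j=1}^{\min\{n,k\}}\lambda_j\tilde p_{n-j}(u)$. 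The boundary term is $\tilde p_n(0)\,\ell_\nu(t,0)$, with $\ell_\nu(t,0)$ as given in \eqref{2.20}. Next I pass to C-D derivatives via \eqref{fDtrelation}, exactly as in \eqref{12}. Since $\tilde p_n^\nu(0)=\tilde p_n(0)$, as in \eqref{13}, the boundary term cancels and \eqref{fM2eq} follows. Interchanging the operator with the integral is justified by the same argument as in Theorem \ref{fPequ}.

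\emph{Step 2: the explicit solution via a compound representation.} Instead of solving the system directly, I would show that
\[
M(\mathcal{L}^\nu(t))\stackrel{d}{=}\sum_{i=1}^{N(\mathcal{L}^\nu(t))}X_i ,
\]
where $N$ is a Poisson process of rate $\Lambda$ and the $X_i$ are as in Theorem \ref{thmDic}. This holds because $M(u)\stackrel{d}{=}\sum_{i=1}^{N(u)}X_i$ for the non-fractional GCP; one checks this by comparing probability generating functions, since $\exp\{-\Lambda u(1-\sum_j(\lambda_j/\Lambda)s^j)\}$ is exactly $\tilde G(s,u)$. Composing with the independent time change $\mathcal{L}^\nu$ preserves the identity. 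Conditioning on $N(\mathcal{L}^\nu(t))=r$ then gives
\[
\tilde p_n^\nu(t)=\sum_{r=0}^n p_r^\nu(t)\,\Prob\{X_1+\dots+X_r=n\}.
\]
The multinomial law gives
\[
\Prob\{X_1+\dots+X_r=n\}=\sum\binom{r}{\alpha_1,\dots,\alpha_k}\prod_j\lambda_j^{\alpha_j}\Lambda^{-r},
\]
where the sum runs over $\alpha_1+\dots+\alpha_k=r$ with $\sum_j j\alpha_j=n$. Into this I substitute $p_r^\nu$: from Theorem \ref{fP2} with $\lambda=\Lambda^{1/2}$ in the first case, and from Theorem \ref{fP3} in the second. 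In the first case the factor $\lambda^{2r}=\Lambda^{r}$ in \eqref{34} cancels $\Lambda^{-r}$, which yields \eqref{pGFCP}.

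\emph{Main obstacle.} The delicate part is the bookkeeping in the case $\lambda>\Lambda^{1/2}$. There the powers $\lambda^{2r}$ from \eqref{34.2} no longer cancel $\Lambda^{-r}$, so a factor $(\lambda^2/\Lambda)^r$ should remain. I also read the index $k$ in \eqref{pGFCP2} as the summation index $r$ (an index clash with the jump order $k$). I would double-check the constant by recomputing the Laplace transform directly. Taking the Laplace transform of \eqref{fM2eq}, or equivalently of the probability generating function in Theorem \ref{thpgfN} with $\Lambda(u-1)$ replaced by $-\sum_j\lambda_j(1-u^j)$, gives
\[
\frac{s^{2\nu-1}+2\lambda s^{\nu-1}}{s^{2\nu}+2\lambda s^\nu+\Lambda-\sum_j\lambda_j u^j}.
\]
Expanding this in powers of $\sum_j\lambda_ju^j$ yields terms of the form $(\sum_j\lambda_ju^j)^r(s^{2\nu-1}+2\lambda s^{\nu-1})/((s^\nu-p_1)(s^\nu-p_2))^{r+1}$. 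I would invert these with \eqref{multMLfLapl} and reconcile the normalizing constants with the stated formula, adjusting the powers of $\lambda$ if needed.
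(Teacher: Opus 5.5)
Your Step 1 and Step 2 follow the paper's proof. The equation is obtained exactly as in Theorem \ref{fPequ}. The explicit formulas come from $M(\mathcal{L}^\nu(t))\stackrel{d}{=}\sum_{i=1}^{N_\Lambda(\mathcal{L}^\nu(t))}X_i$, the multinomial law, and substitution of \eqref{34} or \eqref{34.2}. Your justification of the compound representation (PGF of the ordinary GCP, then conditioning on the independent time change) is more to the point than the paper's appeal to Theorem \ref{thmDic}, which is stated for the inverse stable time change. The case $\lambda=\Lambda^{1/2}$ goes through exactly as you describe.

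For $\lambda>\Lambda^{1/2}$ your suspicion is justified, but the defect is in \eqref{34.2} itself. The leftover factor $(\lambda^2/\Lambda)^r$ you anticipate therefore comes from an incorrect input. Solving \eqref{(3.1)-(3.2)} by Laplace transform gives $\hat p_r^\nu(s)=\Lambda^r(s^{2\nu-1}+2\lambda s^{\nu-1})\big/\big((s^\nu-p_1)(s^\nu-p_2)\big)^{r+1}$. The paper's \eqref{eq:fP3_Laplace} has $\lambda^{2k}$ in place of $\Lambda^k$ and $s^{\nu}$ in place of $s^{\nu-1}$. Applying \eqref{multMLfLapl} then yields
\[
p_r^\nu(t)=\Lambda^r\Big(t^{2r\nu}E^{r+1,r+1}_{\nu,2r\nu+1}(p_1t^\nu,p_2t^\nu)+2\lambda\,t^{(2r+1)\nu}E^{r+1,r+1}_{\nu,(2r+1)\nu+1}(p_1t^\nu,p_2t^\nu)\Big).
\]
With this, $\Lambda^r$ cancels $\Lambda^{-r}$ just as in the first case, and your direct PGF expansion gives the same answer:
\[
\tilde p_n^\nu(t)=\sum_{r=0}^n\sum_{\substack{\alpha_1+\dots+\alpha_k=r\\ \alpha_1+2\alpha_2+\dots+k\alpha_k=n}}\binom{r}{\alpha_1,\dots,\alpha_k}\lambda_1^{\alpha_1}\cdots\lambda_k^{\alpha_k}\Big(t^{2r\nu}E^{r+1,r+1}_{\nu,2r\nu+1}(p_1t^\nu,p_2t^\nu)+2\lambda\,t^{(2r+1)\nu}E^{r+1,r+1}_{\nu,(2r+1)\nu+1}(p_1t^\nu,p_2t^\nu)\Big).
\]
Hence \eqref{pGFCP2} as printed cannot be proved, even after reading $k$ as $r$: the prefactors $\lambda^{2r}$ and $\lambda^{2r+1}$ must be $1$ and $2\lambda$.

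A quick independent check is $n=0$. There $\tilde p_0^\nu(t)=\tilde\ell_\nu(\Lambda,t)$, which by \eqref{eq:xLaplace} carries $2\lambda$, not $\lambda$. With $\lambda$ the $t^\nu$-terms would not cancel, so $1-\tilde p_0^\nu(t)$ would be of order $t^\nu$. That contradicts $1-\tilde p_0^\nu(t)\le\Lambda\,\E\mathcal{L}^\nu(t)=O(t^{2\nu})$. The paper's proof misses this because it substitutes \eqref{34.2} without the bookkeeping you flagged. So instead of ``adjusting the powers of $\lambda$'' toward the statement, carry out your Laplace check and conclude with the corrected formula above.
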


\begin{proof}
Equation \eqref{pGFCP} for the probabilities ${\tilde p}_n^\nu(t)=\Prob\{M(\mathcal{L}^\nu(t))=n\}$  is derived by following the same lines as in the proof of Theorem \ref{fPequ}, and using the governing equation for the generalized  counting process \eqref{pGCP}.
	To prove \eqref{pGFCP} and \eqref{pGFCP2} we use the representation $M^\nu(t) = \sum_{i=1}^{N_\Lambda^\nu(t)} X_i$, where $N_\Lambda^\nu(t)=N_\Lambda(\mathcal{L}^\nu(t))$,  $\mathcal{L}^\nu(t)$ is the inverse process defined in \eqref{1.13}, $X_1, X_2, \dots, X_r$ are independent and identically distributed random variables described in Theorem \ref{thmDic}. By conditioning arguments we have
	\begin{align*}
		\tilde p_j^\nu(t) = \mathbb{P}\{M^\nu(t) = j\} = \sum_{r=0}^j \mathbb{P}\{X_1 + X_2 + \dots + X_r = j\} \mathbb{P}\{N_\Lambda^\nu(t) = r\},
	\end{align*}
where
	\begin{align*}
		\mathbb{P}\{X_1 + X_2 + \dots + X_r = j\} &= \sum_{\substack{\alpha_1+\alpha_2+\dots+\alpha_k=r \\ \alpha_1+2\alpha_2+\dots+k\alpha_k=j}} \binom{r}{\alpha_1, \alpha_2, \dots, \alpha_k} 
		\left(\frac{\lambda_1}{\Lambda}\right)^{\alpha_1} \left(\frac{\lambda_2}{\Lambda}\right)^{\alpha_2} \dots \left(\frac{\lambda_k}{\Lambda}\right)^{\alpha_k}.
	\end{align*}
	Then we substitute the  expressions for the distribution $\mathbb{P}\{N_\Lambda^\nu(t) = r\}$ from \eqref{34} or from \eqref{34.2} for  the cases $\lambda=\Lambda^{1/2}$ and $\lambda>\Lambda^{1/2}$ and obtain \eqref{pGFCP} and \eqref{pGFCP2} correspondingly.    
\end{proof}

Similarly to the case of time-changed Poisson process, the probability generating function of the process $M(\mathcal{L}^\nu(t))$ and its governing equation can be obtained by applying again  the general results from Theorem \ref{Th5},  formulas \eqref{Gpsif1} and \eqref{DftG}, and appealing to  the expression for the Laplace transform of the inverse subordinator $\mathcal{L}^\nu(t)$ known from  Theorem \ref{densinv}, formula \eqref{2.21}. 

\begin{theorem}\label{thpgfM}
	The probability generating function $
	\tilde {G}_\nu(u, t) = \sum_{k=0}^{\infty} u^k \tilde{p}_k^\nu(t)$, $|u| \leq 1$, solves the following fractional differential equation
	\begin{equation}\label{pgfMequ}
		\frac{\partial^{2\nu} \tilde G_\nu(u, t)}{\partial t^{2\nu}} + 2\lambda \frac{\partial^\nu \tilde G_\nu(u, t)}{\partial t^\nu} = -\sum_{j=1}^{k}\lambda_j(1-u^j)\tilde G_\nu(u, t), \,\,|u| < 1,
	\end{equation}
	subject to the initial condition $\tilde {G}_\nu(u, 0) = 1$, and is given as
	\begin{equation}\label{pgfM}
		\tilde G_\nu(u, t) = \tilde \ell_\nu \Big(\sum_{j=1}^{k}\lambda_j(1-u^j), t\Big), \,\,|u| < 1,
	\end{equation}
	where the expression for $\tilde \ell_\nu$ is given by \eqref{2.21}  $($or by \eqref{eq:xLaplace}$)$.
	\end{theorem}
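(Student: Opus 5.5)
The proof follows the paper's remark just before the statement. We condition on the time change: by independence of $M$ and $\mathcal{L}^\nu$, and using the probability generating function $\tilde G(u,t)=\exp\{-\sum_{j=1}^k\lambda_j(1-u^j)t\}$ of the GCP,
\begin{equation*}
\tilde G_\nu(u,t)=\E\, u^{M(\mathcal{L}^\nu(t))}=\int_0^\infty \exp\Big\{-x\sum_{j=1}^{k}\lambda_j(1-u^j)\Big\}\,\ell_\nu(x,t)\,dx=\tilde\ell_\nu\Big(\sum_{j=1}^{k}\lambda_j(1-u^j),t\Big).
\end{equation*}
This is exactly \eqref{Gpsif1} of Theorem \ref{Th5} with $\psi(x)=x$ (no inner subordinator, $H^\psi(t)=t$) and $Y^f=\mathcal{L}^\nu$. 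The Bern\v{s}tein function here is $f(s)=s^{2\nu}+2\lambda s^\nu$, the Laplace exponent of $\mathcal{H}^\nu$ in \eqref{2.1}. Its L\'evy tail is $\nu_f(t)=\frac{t^{-2\nu}}{\Gamma(1-2\nu)}+2\lambda\frac{t^{-\nu}}{\Gamma(1-\nu)}$, which is absolutely continuous with infinite total mass, so Condition I holds. For $|u|<1$ the argument $\gamma=\sum_j\lambda_j(1-u^j)$ has nonnegative real part, and the explicit form of $\tilde\ell_\nu$ is then read off from \eqref{2.21} or \eqref{eq:xLaplace}.

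The governing equation is the eigenfunction property \eqref{lapl}: $\mathcal{D}^f_t\tilde\ell_\nu(\gamma,t)=-\gamma\tilde\ell_\nu(\gamma,t)$ with $\tilde\ell_\nu(\gamma,0)=1$. By linearity of \eqref{fDt} in $\nu_f$, for this $f$ the operator $\mathcal{D}^f_t$ equals $\frac{\partial^{2\nu}}{\partial t^{2\nu}}+2\lambda\frac{\partial^\nu}{\partial t^\nu}$, the sum of C-D derivatives. Substituting $\gamma=\sum_j\lambda_j(1-u^j)$ gives \eqref{pgfMequ}, i.e.\ \eqref{DftG} with $\psi(x)=x$, together with $\tilde G_\nu(u,0)=1$.

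As an independent check, we would take the $t$-Laplace transform of \eqref{pgfMequ}: $(\mu^{2\nu}+2\lambda\mu^\nu+\gamma)\hat G=\mu^{2\nu-1}+2\lambda\mu^{\nu-1}$. This coincides with \eqref{eq:DDLT}, confirming \eqref{pgfM} by uniqueness of the Laplace transform. A second route to the equation is to multiply \eqref{fM2eq} by $u^n$ and sum over $n$. The convolution sum becomes $\sum_j\lambda_j u^j\tilde G_\nu$, so the right-hand side is $-\sum_j\lambda_j(1-u^j)\tilde G_\nu$.

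The main obstacle is justifying the analytic steps rather than the algebra. In the summation route we must interchange the fractional derivatives with the series, which holds by uniform convergence for $|u|<1$ since the $\tilde p_n^\nu$ are bounded. We must also extend \eqref{2.21}, stated for real $0<\gamma<\lambda^2$, to complex $\gamma$ with $\mathrm{Re}\,\gamma\ge0$. For this we would use analytic continuation in $\gamma$ of both sides: the left side is entire in $\gamma$ for $\mathrm{Re}\,\gamma\ge 0$, and the right side is symmetric in $\pm\sqrt{\lambda^2-\gamma}$, hence single-valued, since the Mittag-Leffler functions are entire. Alternatively, the form \eqref{eq:xLaplace} avoids the branch issue entirely.
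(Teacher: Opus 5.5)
Your proposal is correct and follows essentially the paper's own route. The paper obtains \eqref{pgfM} and \eqref{pgfMequ} by specializing Theorem \ref{Th5} (formulas \eqref{Gpsif1} and \eqref{DftG}) to $\psi(x)=x$ with Laplace exponent $f(s)=s^{2\nu}+2\lambda s^{\nu}$, and then reading off $\tilde\ell_\nu$ from \eqref{2.21}; you do the same. Your additions are sound: identifying $\mathcal{D}^f_t$ with the sum of C-D derivatives, and continuing \eqref{2.21} analytically beyond $0<\gamma<\lambda^2$, a point the paper passes over. The one weak spot is the optional termwise-summation route: boundedness of the $\tilde p_n^\nu$ does not by itself justify differentiating the series term by term, since that needs control of the derivatives.
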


\begin{remark}
More general time-changed processes can be considered of the form   ${M}^{\psi,\nu}(t)=M(H^{\psi}(\mathcal{L}^\nu(t))$, that is, the generalized counting process with double time-change by an independent subordinator $H^{\psi}$ and an inverse subordinator $\mathcal{L}^\nu(t)$, which are independent of $M$. Then the probabilities ${p}_n^{\psi,\nu}(t)=\Prob\left\{{M}^{\psi,\nu}(t)=n\right\}$ and the probability generating function of ${M}^{\psi,\nu}$ will be governed by  the equations of the form \eqref{fM2eq} and \eqref{pgfMequ}, where the left hand sides are retained, that is, with that telegraph-type operator in time variable, but the right hand sides will be given by those in equations \eqref{Dtfpn} and \eqref{Gpsif1} in Theorem \ref{Th5}.
\end{remark}

\begin{remark}
Consider  the time-changed process $\mathsf{M}(\mathcal{L}^\nu(t))$, where $\mathsf{M}$ is a nonhomogeneous generalized counting process with intensity functions $\lambda_j(t):[0,\infty)\to[0,\infty)$, defined in \cite{KK2022Arxiv},  and the inverse process $\mathcal{L}^\nu(t)$ is  defined in \eqref{1.13}. 
Then 
     the marginal distributions ${\tilde{\mathsf p}}_n^\nu(t)=\Prob\left\{\mathsf{M}\left(\mathcal{L}^\nu(t)\right)=n\right\}$, $n=0,1,\dots,$ satisfy the differential-integral equations
    \begin{equation*}
    \frac{d^{2\nu} {\tilde{\mathsf p}}_n^\nu(t)}{dt^{2\nu}} + 2\lambda \frac{d^\nu {\tilde{\mathsf p}}_n^\nu}{dt^\nu}=\int_{0}^{\infty}\Bigl(-\sum_{j=1}^{k}\lambda_j(u){\tilde{\mathsf p}}^\nu_n(u)+ \sum_{j=1}^{\min\{n,k\}}\lambda_j(u){\tilde{\mathsf p}}^\nu_{n-j}(u)\Bigr)\ell_\nu(t,u)du,
    \end{equation*}
    with the usual initial condition,
        where the derivatives are in the  C-D sense,  $\ell_\nu(t,u)$ is the density of the inverse subordinator $\mathcal{L}^\nu(t)$.
        
        Note that for particular case where $\lambda_j(t)=\lambda(t)$ and $\lambda_j(t)=(1-\rho)\rho^{j-1}\lambda(t)/(1-\rho^k)$, $j=1,\ldots, k$ we obtain the time changed non-homogeneous Poisson process of order $k$ and the non-homogeneous P\'olya-Aeppli process of order $k$ respectively, which generalize time-changed models of these processes considered in \cite{KLS}.

\end{remark}

\subsection{Further generalizations}

The previous results can be generalized by considering the linear combinations of stable subordinators of the form 
\begin{equation}\label{genHnu}
		\mathcal{H}^\nu(t)=\mathcal{H}^{\nu_1,\ldots,\nu_N}(t) = \sum_{j=1}^N (\mu_j)^{\frac{1}{\nu_j}}H_j^{\nu_j}(t), \quad t > 0,\,\,\mu_j>0,\,\,\,  \nu_j \in (0,1),
	\end{equation}
	where  $H_j^{\nu_j}(t)$  are independent stable subordinators of orders ${\nu_j}$, and  the corresponding  inverse  process 
	\begin{equation}\label{genLnu}
	\mathcal{L}^\nu(t)=\mathcal{L}^{\nu_1,\ldots,\nu_N}(t)  = \inf \left\{ s > 0 : \mathcal{H}^{\nu_1,\ldots,\nu_N}(t)  \ge t \right\}, \qquad t > 0.
	\end{equation}

	 Let $\ell_\nu(x,t)=\ell_{\nu_1,\ldots,\nu_m}(x,t)$ denote now the probability density of the process \eqref{genLnu}. The following result was stated in \cite{OT2017}.

\begin{theorem}[\cite{OT2017}]\label{geninvequ}	
	{The density $\ell_\nu(x,t)$ of the process $\mathcal{L}^\nu(t)$, $t > 0$, solves the time-fractional boundary-initial problem}
	\begin{equation}\label{genLdensity}
		\begin{cases}
			\sum_{j=1}^N \,\mu_j\,\mathbb{D}_t^{\nu_j} \, \ell_\nu(x,t) = -\frac{\partial}{\partial x} \ell_\nu(x,t), \quad x > 0,\,\, t > 0,\,\, \mu_j>0,\,\,\nu_j \in (0,1), \\
			\ell_\nu(x,0) = \delta(x), \\
			\ell_\nu(0,t) = \sum_{j=1}^N \mu_j\,\frac{t^{-\nu_j}}{\Gamma(1-\nu_j)} ,
		\end{cases}
	\end{equation}
	with the fractional derivatives in the R-L sense.
\end{theorem}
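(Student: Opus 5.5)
The plan is to view $\mathcal{H}^\nu$ in \eqref{genHnu} as a single subordinator with Laplace exponent $f$, and to apply the general convolution-type theory recalled in Section 2 to the inverse $Y^f=\mathcal{L}^\nu$. By independence of the $H_j^{\nu_j}$ and since $\E e^{-sH_j^{\nu_j}(t)}=e^{-ts^{\nu_j}}$, we get
\begin{equation*}
\E e^{-s\mathcal{H}^\nu(t)}=\prod_{j=1}^N e^{-t\mu_j s^{\nu_j}}=e^{-tf(s)},\qquad f(s)=\sum_{j=1}^N\mu_j s^{\nu_j}.
\end{equation*}
This $f$ is a Bern\v{s}tein function of the form \eqref{fx} with $a=b=0$ and L\'evy measure $\overline{\nu}_f(ds)=\sum_j \mu_j\frac{\nu_j}{\Gamma(1-\nu_j)}s^{-\nu_j-1}ds$. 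Its tail is
\begin{equation*}
\nu_f(s)=\sum_{j=1}^N\mu_j\frac{s^{-\nu_j}}{\Gamma(1-\nu_j)}.
\end{equation*}
Clearly $\overline{\nu}_f(0,\infty)=\infty$ and $\nu_f$ is absolutely continuous on $(0,\infty)$, so Condition I holds. Hence $\mathcal{L}^\nu$ has a density $\ell_\nu(x,t)$.

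Next we identify the operator. Since \eqref{fDDt} is linear in $\nu_f$ and $b=0$, we have
\begin{equation*}
\mathbb{D}^f_t u(t)=\sum_j\mu_j\frac{d}{dt}\int_0^t u(t-s)\frac{s^{-\nu_j}}{\Gamma(1-\nu_j)}ds=\sum_j\mu_j\mathbb{D}^{\nu_j}_t u(t),
\end{equation*}
that is, a linear combination of classical R-L derivatives. Equation \eqref{dlf} (the case $b=0$, so $0<x<\infty$) then gives the first line of \eqref{genLdensity}. The boundary conditions \eqref{dlfin} give $\ell_\nu(x,0)=\delta(x)$ and $\ell_\nu(0,t)=\nu_f(t)$, which is exactly the stated sum. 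The condition at $u/b$ is vacuous because $b=0$.

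As an independent check, which I would include to avoid relying entirely on \cite{T}, I would verify the result via a double Laplace transform. By \eqref{t-lapl-inv}, $\mathcal{L}_t\ell_\nu(x,\cdot)(r)=\frac{f(r)}{r}e^{-xf(r)}$. Applying the R-L Laplace rule $\mathcal{L}[\mathbb{D}^{\nu_j}u](r)=r^{\nu_j}\mathcal{L}[u](r)$, valid since the fractional integral of order $1-\nu_j$ vanishes at $0$ for these densities, the left side transforms to $f(r)\frac{f(r)}{r}e^{-xf(r)}$. The right side transforms to $-\partial_x$ of the same expression, which is the same quantity, so the equation holds. In addition, $x=0$ gives $f(r)/r=\sum_j\mu_j r^{\nu_j-1}$, which is the transform of $\nu_f(t)$. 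Finally, letting $r\to\infty$ shows concentration at $x=0$, corresponding to the delta initial condition.

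The main obstacle is justifying the interchange of $\mathbb{D}_t^{\nu_j}$ with the Laplace transform, together with the regularity of $\ell_\nu$ (absolute continuity in $t$, vanishing of the boundary term as $x\to\infty$). My first move is to defer these points to \cite{T}, whose general theorem covers them under Condition I, rather than re-derive them.
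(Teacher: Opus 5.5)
Your proof is correct, but it takes a different route from the paper. The paper gives no proof of this statement: it is quoted from \cite{OT2017}.

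The closest argument the paper does give is for the tempered analogue \eqref{temp3}, and it works entirely in the Laplace domain. It transforms the boundary-value problem in $t$ and $x$, uses the prescribed boundary value to evaluate $\tilde\ell(0,s)$, and obtains the double transform \eqref{temp4}. It then checks that the density of the inverse process, written as $-\partial_x\int_0^t h(u,x)\,du$, has the same double transform.

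You instead treat the statement as a corollary of the general theorem of \cite{T} recalled in \eqref{dlf}--\eqref{dlfin}. Your identifications are all right:
\begin{itemize}
\item the scaling $\mu_j^{1/\nu_j}$ in \eqref{genHnu} gives the Laplace exponent $f(s)=\sum_j\mu_j s^{\nu_j}$ with $a=b=0$;
\item the tail $\nu_f$ is the weighted sum of the stable tails;
\item $\mathbb{D}^f_t$ is linear in $\nu_f$, so it splits as $\sum_j\mu_j\mathbb{D}^{\nu_j}_t$;
\item \eqref{dlfin} delivers exactly the stated initial and boundary data.
\end{itemize}

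Your route gains economy and rigor. Existence of the density (via Condition I), the well-definedness of $\mathbb{D}^{\nu_j}_t\ell_\nu$, and the interchange issues you flag are all inherited from \cite{T} rather than tacitly assumed. The same two-line argument with $f=f^{2\alpha,\rho}+f^{\alpha,\rho}$ would also prove \eqref{temp3} outright.

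The transform-domain route, which is essentially your ``independent check'', is more explicit and self-contained. Here \eqref{t-lapl-inv} is elementary, following from $\Prob\{\mathcal{L}^\nu(t)<x\}=\Prob\{\mathcal{H}^\nu(x)>t\}$ exactly as in the paper's computation \eqref{LL}. Along the way it produces the double transform $f(s)/\bigl(s(f(s)+\gamma)\bigr)$, which the paper later inverts through Mittag-Leffler functions (cf.\ \eqref{eq:DDLT}). On its own, however, it is only an identity between transforms. To become a pointwise statement it needs exactly the regularity you mention: the fractional integral of order $1-\nu_j$ of $\ell_\nu(x,\cdot)$ must vanish at $t=0^+$ for $x>0$, and $\partial_x$ must be exchangeable with the $t$-transform.
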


Consider the time-changed process ${\mathcal N}^\nu(t)={\mathcal N}(\mathcal{L}^\nu(t))$, where $\mathcal N$ is the Poisson process with the rate $\Lambda$ and $\mathcal{L}^\nu(t)$ is now the inverse process defined in \eqref{genLnu}, independent of $\mathcal N$.

Analogously to Theorem \ref{fPequ}, and by using Theorem \ref{geninvequ}, the following result can be stated.

\begin{theorem}\label{genfPequ} The probabilities ${p}^\nu_k(t)=\Prob\{\mathcal N(\mathcal{L}^\nu(t))=k\}$
satisfy the following  equation
\begin{equation}\label{gen(3.1)-(3.2)}
	\sum_{j=1}^N \,\mu_j \frac{d^{\nu_j} }{dt^{\nu_j}}\, p_k^\nu(t) = -\Lambda\left(p_k^\nu(t) - p_{k-1}^\nu(t)\right), \quad k \ge 0,
\end{equation}
with the standard initial condition and the  fractional derivatives in the C-D sense.
\end{theorem}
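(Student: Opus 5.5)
We follow the proof of Theorem \ref{fPequ} step by step, replacing the two-term operator by $\sum_{j=1}^N \mu_j \mathbb{D}_t^{\nu_j}$ and using Theorem \ref{geninvequ} for the density of $\mathcal{L}^\nu(t)$. Conditioning on the independent time change gives
\[
p_k^\nu(t)=\int_0^\infty p_k(u)\,\ell_\nu(u,t)\,du,\qquad p_k(u)=e^{-\Lambda u}\frac{(\Lambda u)^k}{k!}.
\]
The Poisson probabilities satisfy $\frac{d}{du}p_k(u)=-\Lambda\bigl(p_k(u)-p_{k-1}(u)\bigr)$ with $p_{-1}\equiv 0$ and $p_k(0)=\delta_{k0}$.

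First we apply $\sum_{j}\mu_j\mathbb{D}_t^{\nu_j}$ (R-L sense) to $p_k^\nu(t)$ and move it under the integral. Then we replace $\sum_j\mu_j\mathbb{D}_t^{\nu_j}\ell_\nu(u,t)$ by $-\partial_u\ell_\nu(u,t)$ using \eqref{genLdensity}. Integrating by parts in $u$ gives
\[
\int_0^\infty \ell_\nu(u,t)\,p_k'(u)\,du + p_k(0)\,\ell_\nu(0,t).
\]
The boundary term at infinity vanishes because $p_k$ is bounded and $\ell_\nu(u,t)\to0$ as $u\to\infty$. Substituting the Poisson equation, the integral becomes $-\Lambda\bigl(p_k^\nu(t)-p_{k-1}^\nu(t)\bigr)$. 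The boundary condition in \eqref{genLdensity} turns the remaining term into $p_k(0)\sum_j\mu_j t^{-\nu_j}/\Gamma(1-\nu_j)$.

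Next we convert each R-L derivative to C-D form via \eqref{fDtrelation}, applied with $f(x)=x^{\nu_j}$. In that case $\nu_f(t)=t^{-\nu_j}/\Gamma(1-\nu_j)$, so
\[
\mathbb{D}_t^{\nu_j}u=\frac{d^{\nu_j}u}{dt^{\nu_j}}+\frac{t^{-\nu_j}}{\Gamma(1-\nu_j)}\,u(0).
\]
Summing with weights $\mu_j$ subtracts $p_k^\nu(0)\sum_j\mu_j t^{-\nu_j}/\Gamma(1-\nu_j)$. Since $\ell_\nu(u,0)=\delta(u)$, we have $p_k^\nu(0)=p_k(0)$, so this cancels the boundary term exactly and yields \eqref{gen(3.1)-(3.2)}. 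The same identity $p_k^\nu(0)=p_k(0)=\delta_{k0}$ gives the standard initial condition.

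The main obstacle is justifying the interchange of the fractional operators with the $u$-integral and the vanishing of the boundary term at infinity. As in the proof of Theorem \ref{fPequ}, we would handle these the way the paper does elsewhere.
\begin{itemize}
\item[(i)] The integrand $p_k(u)\ell_\nu(u,t)$ is dominated by $\ell_\nu(u,t)$, whose integral is $1$, so the interchange should follow by dominated convergence.
\item[(ii)] As a fallback, we would verify the identity through Laplace transforms in $t$. The $t$-Laplace transform of $\ell_\nu$ is $\frac{f(s)}{s}e^{-uf(s)}$ with $f(s)=\sum_j\mu_j s^{\nu_j}$, by \eqref{t-lapl-inv}. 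From this, $\mathcal{L}[p_k^\nu](s)=\frac{f(s)}{s}\int_0^\infty p_k(u)e^{-uf(s)}du$, and the C-D Laplace rule $f(s)\mathcal{L}[u](s)-\frac{f(s)}{s}u(0)$ confirms the equation directly.
\end{itemize}
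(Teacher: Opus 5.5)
Your proposal is correct and is essentially the paper's argument. The paper proves this theorem only by saying it follows analogously to Theorem \ref{fPequ} using the density equation and boundary condition of Theorem \ref{geninvequ}, and that is exactly the chain you carry out: conditioning, applying the R-L operator under the integral, integrating by parts, and cancelling the boundary term via \eqref{fDtrelation} together with $p_k^\nu(0)=p_k(0)$.

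One minor point: domination of the integrand in (i) does not by itself justify moving the $t$-derivative inside the $u$-integral. Your Laplace-transform check in (ii), with $f(s)=\sum_j\mu_j s^{\nu_j}$, is, however, a complete verification on its own.
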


Under  particular conditions on the parameters $\nu_j$, $\mu_j$, $j=1,\ldots, N$, and $\Lambda$, the probabilities $p_k^\nu(t)$ can be calculated as shown in  the theorem below.

\begin{theorem}
	Let $\mathcal N_\Lambda(\mathcal{L}^\mu(t))$ be the time-changed Poisson process with the rate $\Lambda$, where $\mathcal{L}^\nu(t)$ is the inverse process defined in 
	\eqref{genLnu} with parameters $\mu_j\ge0$, $j=1,\dots,N$ such that the following holds:
	\begin{equation} \label{eq:T12}
		\sum_{j=1}^N \mu_j x^j + \Lambda = \prod_{j=1}^M (x-\eta_j)^{m_j}, \quad \eta_j \in \mathbb{R}, \quad \sum_{j=1}^M m_j=N,
	\end{equation}
	and $\nu_j$ are of the form $\nu_j = j\nu$, $j=1,\dots,N$, for some $\nu \leq \frac{1}{N}$.
	
	Then $p_k^\nu(t)$ which solve equation 
	\eqref{gen(3.1)-(3.2)} can be represented as follows:
	\begin{equation} \label{eq:T12_2}
		p_k^\nu(t) = \Lambda^k \sum_{j=1}^N \mu_j t^{\delta_j - 1} E_{\nu, \delta_j}^{\gamma_1, \dots, \gamma_M} (\eta_1 t^\nu, \dots, \eta_M t^\nu),
	\end{equation}
	where $\gamma_j = m_j(k+1)$, $\delta_j = \nu(N(k+1)-j)+1$, $E_{\nu, \delta_j}^{\gamma_1, \dots, \gamma_M}$ is the multivariate Mittag-Leffler function defined by \eqref{multML}.
			\end{theorem}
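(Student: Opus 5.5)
The plan is to follow the same Laplace-transform route used in the proof of Theorem \ref{fP3}, now applied to the equation \eqref{gen(3.1)-(3.2)} of Theorem \ref{genfPequ} with $\nu_j=j\nu$.

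\emph{Step 1: transform the system.} Write $P_k(s)=\mathcal{L}(p_k^\nu)(s)$. The C-D derivatives obey
\[
\mathcal{L}\Big[\tfrac{d^{j\nu}}{dt^{j\nu}}u\Big](s)=s^{j\nu}\mathcal{L}[u](s)-s^{j\nu-1}u(0).
\]
This is the case $f(x)=x^{j\nu}$ of the Laplace transform formula for $\mathcal{D}^f_t$ in Section 2. Applying it to \eqref{gen(3.1)-(3.2)}, and using $p_k^\nu(0)=\delta_{k0}$ and $p_{-1}^\nu\equiv0$, gives the recursion
\[
\Big(\sum_{j=1}^N\mu_j s^{j\nu}+\Lambda\Big)P_k(s)=\Lambda P_{k-1}(s)+\delta_{k0}\sum_{j=1}^N\mu_j s^{j\nu-1}.
\]
Induction on $k$ then yields
\[
P_k(s)=\Lambda^k\,\frac{\sum_{j=1}^N\mu_j s^{j\nu-1}}{\big(\sum_{j=1}^N\mu_j s^{j\nu}+\Lambda\big)^{k+1}}.
\]

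\emph{Step 2: factor the denominator.} Use hypothesis \eqref{eq:T12} with $x=s^\nu$. Since the right-hand side of \eqref{eq:T12} is monic of degree $N$, the statement implicitly normalises $\mu_N=1$, and I would make this explicit. Then
\[
\Big(\sum_{j}\mu_j s^{j\nu}+\Lambda\Big)^{k+1}=\prod_{i=1}^M(s^\nu-\eta_i)^{m_i(k+1)}.
\]
So $P_k(s)$ splits into the $N$ terms
\[
\Lambda^k\mu_j\,\frac{s^{j\nu-1}}{\prod_{i}(s^\nu-\eta_i)^{\gamma_i}},\qquad \gamma_i=m_i(k+1),\quad \sum_i\gamma_i=N(k+1).
\]

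\emph{Step 3: invert term by term.} Each term is inverted with formula \eqref{multMLfLapl}, taking Mittag-Leffler index $\nu$, the $\eta$'s and $\gamma$'s above, and $\delta$ chosen so that $s^{\nu\sum\gamma_i-\delta}=s^{j\nu-1}$. This forces $\delta_j=\nu(N(k+1)-j)+1$, which is exactly the parameter in \eqref{eq:T12_2}. The original of the $j$-th term is therefore $\Lambda^k\mu_j t^{\delta_j-1}E^{\gamma_1,\dots,\gamma_M}_{\nu,\delta_j}(\eta_1t^\nu,\dots,\eta_Mt^\nu)$. Summing over $j$ and invoking uniqueness of the Laplace transform gives \eqref{eq:T12_2}.

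The main obstacle is the legitimacy of this inversion rather than the algebra. Formula \eqref{multMLfLapl} is stated only for $|\eta_i/s^\nu|<1$. This holds for all $s$ large enough, which suffices because two continuous functions whose Laplace transforms agree on a half-line coincide. The multivariate series converges absolutely and has at most exponential growth in $t$, so its Laplace transform exists there and may be computed termwise. I would also check that $\delta_j>0$: since $j\le N$, $\delta_j\ge \nu Nk+1>0$. I would verify the base case $k=0$ directly as a consistency check, together with the role of the restriction $\nu\le 1/N$, which ensures every order $j\nu\le 1$ so the C-D transform formula applies.
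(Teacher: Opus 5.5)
Your proposal is correct and follows the paper's proof essentially step for step: you Laplace-transform \eqref{gen(3.1)-(3.2)} and solve the recursion to get $\hat p_k^\nu(s)=\Lambda^k\sum_j\mu_j s^{j\nu-1}/(\sum_j\mu_j s^{j\nu}+\Lambda)^{k+1}$, then factor the denominator via \eqref{eq:T12} with $x=s^\nu$ and invert term by term with \eqref{multMLfLapl}. Your added points are accurate refinements that the paper leaves implicit: \eqref{eq:T12} forces $\mu_N=1$ by comparing leading coefficients, \eqref{multMLfLapl} must be read with arguments $\eta_i x^\nu$ (as you use it), and the inversion is justified for large $s$.
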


\begin{proof}
	Taking the Laplace transform of the equation 
	\eqref{gen(3.1)-(3.2)} we obtain (denoting the Laplace transform as $\hat p$):
	\begin{equation*}
		\sum_{j=1}^N \mu_j s^{\nu j} \hat{p}_k^\nu(s) - \sum_{j=1}^N \mu_j s^{\nu j - 1} p_k^\nu(0) = -\Lambda(\hat{p}_k^\nu(s) - \hat{p}_{k-1}^\nu(s)).
	\end{equation*}
	In view of the initial condition, we have $p_k^\nu(0)=1$ for $k=0$ and $p_k^\nu(0)=0$ for $k \geq 1$, therefore,
	\begin{equation*}
		\hat{p}_0^\nu(s) = \frac{\sum_{j=1}^N \mu_j s^{\nu j - 1}}{\sum_{j=1}^N \mu_j s^{\nu j} + \Lambda}
	\end{equation*}
	and for $k \geq 1$
	\begin{equation*}
		\hat{p}_k^\nu(s) = \frac{\Lambda}{\sum_{j=1}^N \mu_j s^{\nu j} + \Lambda} \hat{p}_{k-1}(s) = \frac{\Lambda^k \sum_{j=1}^N \mu_j s^{\nu j - 1}}{\left(\sum_{j=1}^N \mu_j s^{\nu j} + \Lambda\right)^{k+1}}\, .
	\end{equation*}
	Now we apply \eqref{eq:T12} and obtain
	\begin{equation} \label{eq:T12_3}
		\hat{p}_k^\nu(s) = \frac{\Lambda^k \sum_{j=1}^N \mu_j s^{\nu j - 1}}{\prod_{j=1}^M (s^\nu - \eta_j)^{m_j(k+1)}}\, .
	\end{equation}
	
	To invert \eqref{eq:T12_3} we use the formula \eqref{multMLfLapl} and come to the expression \eqref{eq:T12_2}.
\end{proof}

\begin{remark}
One immediate choice for the collection of $\mu_j$, $j=1,\ldots, N$, and $\Lambda$ to satisfy \eqref{eq:T12} comes in relation with probability generating function of a sum $X=\sum_{i=1}^{N} X_i$ of independent Bernoulli random variables $X_i$ with $\Prob\{X_i=1\}=b_i$, $\Prob\{X_i=0\}=1-b_i$. Let $p_k=\Prob\{X=k\}$. Then we have $\sum_{i=0}^{N} p_k x^k=\prod_{i=1}^{N} b_i\prod_{i=1}^{N}\big(x+\frac{1-b_i}{b_i}\big)$, or $\big(\prod_{i=1}^{N} b_i\big)^{-1}\sum_{i=0}^{N} p_k x^k=\prod_{i=1}^{N}\big(x+\frac{1-b_i}{b_i}\big)$, which gives options to choose values $\mu_j$, $j=1,\ldots, N$, and $\Lambda$, and corresponding $\eta_i$, such that \eqref{eq:T12} holds. Note that $b_i$ may be all distinct, or all equal, or just some of them could coincides, thus, giving a variety of representations.

With the particular choice $\mu_j=\binom{N}{j}\lambda^{N-j}$, $\Lambda=\lambda^n$, \eqref{eq:T12} becomes
			$\sum_{j=1}^N \mu_j x^j + \Lambda =  (x+\lambda)^N$.
			  
To inverst \eqref{eq:T12_3} in this case, we can use the formula
\begin{equation}
		\mathscr{L} \left\{ t^{\gamma-1} E_{\beta, \gamma}^\delta (\omega t^\beta); s \right\} = \frac{s^{\beta\delta - \gamma}}{(s^\beta - \omega)^\delta},
	\end{equation}
	(where $Re(\beta) > 0$, $Re(\gamma) > 0$, $Re(\delta) > 0$ and $s > |\omega|^{\frac{1}{Re(\beta)}}$).			
			 
			 Therefore, in this case
	 $p_k^\nu(t)$  can be represented as follows:
\begin{center}	 
	$
		p_k^\nu(t) = \sum_{j=1}^N \binom{N}{j}  (\lambda t)^{\delta_j - 1} E_{\nu, \delta_j}^{N(k+1)} (-\lambda t^\nu),
		$
		\end{center}
where $\delta_j=\nu(N(k+1)-j)+1$, $E_{\nu, \delta}^{\gamma}$ is the Mittag-Leffler function defined by \eqref{ML3}.
\end{remark}

\medskip

We can next consider the time-changed process $M^\nu(t)=M(\mathcal{L}^\nu(t))$, where $M$ is the generalized counting process with parameters $\lambda_1, \ldots, \lambda_k$, and $\mathcal{L}^\nu(t)$ is the inverse process defined in 
	\eqref{genLnu} with parameters $\mu_j$, $j=1,\dots,N$,   independent of $M$.

Then we can state the following result for the probabilities ${\tilde p}_n^\nu(t)=\Prob\{M(\mathcal{L}^\nu(t))=n\}$.

\begin{theorem}\label{genfM2}
The probabilities ${\tilde p}_n^\nu(t)$
satisfy the following  equation
\begin{equation}\label{genfM2eq}
	\sum_{j=1}^N \,\mu_j \frac{d^{\nu_j} }{dt^{\nu_j}}\, \tilde p_n^\nu = - \Lambda \tilde p_n(t) + \sum_{j=1}^{\min\{n,k\}}\lambda_j \tilde p_{n-j}(t), \,\, n\geq0,
\end{equation}
with the standard initial condition and the  fractional derivatives in the C-D sense.

If  \eqref{eq:T12} holds, then 
	the probabilities ${\tilde p}_n^\nu(t)$,  $n = 0, 1, \dots$, $t \ge 0$, can be given as follows:
		\begin{align*}\label{pGFCP}
		\tilde p_n^\nu(t) &= \sum_{r=0}^n \sum_{\substack{\alpha_1+\alpha_2+\dots+\alpha_k=r \\ \alpha_1+2\alpha_2+\dots+k\alpha_k=n}} \binom{r}{\alpha_1, \alpha_2, \dots, \alpha_k} \lambda_1^{\alpha_1} \lambda_2^{\alpha_2} \dots \lambda_k^{\alpha_k} \ p_r^\nu(t),
	\end{align*}
	where $p_k^\nu(t)$ are defined in \eqref{eq:T12_2}.
\end{theorem}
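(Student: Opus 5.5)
The statement has two parts, and I will handle them separately: the governing equation \eqref{genfM2eq}, and the explicit representation of $\tilde p_n^\nu(t)$ under \eqref{eq:T12}.

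\emph{The governing equation.} I follow the proof of Theorem \ref{fPequ}, using Theorem \ref{geninvequ} in place of Theorem \ref{densinv}. By independence of $M$ and $\mathcal{L}^\nu$,
\[
\tilde p_n^\nu(t)=\int_0^\infty \tilde p_n(u)\,\ell_\nu(u,t)\,du ,
\]
where $\tilde p_n(u)$ are the GCP probabilities. Apply the R-L operator $\sum_{j=1}^N\mu_j\mathbb{D}_t^{\nu_j}$ under the integral and use the first line of \eqref{genLdensity} to replace it by $-\partial_u\ell_\nu$. Integrating by parts in $u$ produces two terms.
\begin{itemize}
\item The integral $\int_0^\infty \ell_\nu(u,t)\,\partial_u \tilde p_n(u)\,du$. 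By \eqref{pGCP} it equals $-\Lambda\tilde p_n^\nu(t)+\sum_{j=1}^{\min\{n,k\}}\lambda_j\tilde p_{n-j}^\nu(t)$.
\item The boundary term $\tilde p_n(0)\,\ell_\nu(0,t)=\tilde p_n(0)\sum_j\mu_j t^{-\nu_j}/\Gamma(1-\nu_j)$.
\end{itemize}
The upper boundary term vanishes because $\ell_\nu(u,t)\to0$ as $u\to\infty$. Next, the relation \eqref{fDtrelation}, applied to each $f(x)=x^{\nu_j}$ with $\nu_f(t)=t^{-\nu_j}/\Gamma(1-\nu_j)$, turns the R-L operator into the C-D one minus exactly $\sum_j\mu_j\frac{t^{-\nu_j}}{\Gamma(1-\nu_j)}\tilde p_n^\nu(0)$. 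Since $\tilde p_n^\nu(0)=\tilde p_n(0)$ by $\ell_\nu(u,0)=\delta(u)$, the boundary terms cancel and \eqref{genfM2eq} follows.

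\emph{The explicit formula.} I use the compound representation of Theorem \ref{thmDic}, transferred to the present time change: $M(\mathcal{L}^\nu(t))\stackrel{d}{=}\sum_{i=1}^{\mathcal N_\Lambda(\mathcal{L}^\nu(t))}X_i$ with $\Prob\{X_i=j\}=\lambda_j/\Lambda$. This holds because $M(u)\stackrel{d}{=}\sum_{i=1}^{N_\Lambda(u)}X_i$ for each fixed $u$ (the $\nu=1$ case of Theorem \ref{thmDic}), and conditioning on $\mathcal{L}^\nu(t)$, which is independent of everything else, preserves the identity. Conditioning on the number of jumps and using the multinomial law for $X_1+\dots+X_r=n$, exactly as in the proof of Theorem \ref{fM2}, gives
\[
\tilde p_n^\nu(t)=\sum_{r=0}^n\sum_{\substack{\alpha_1+\dots+\alpha_k=r\\ \alpha_1+2\alpha_2+\dots+k\alpha_k=n}}\binom{r}{\alpha_1,\dots,\alpha_k}\prod_{j=1}^k\Big(\frac{\lambda_j}{\Lambda}\Big)^{\alpha_j}\Prob\{\mathcal N_\Lambda(\mathcal{L}^\nu(t))=r\}.
\]
By Theorem \ref{genfPequ} and the preceding theorem, $\Prob\{\mathcal N_\Lambda(\mathcal{L}^\nu(t))=r\}=p_r^\nu(t)$ is given by \eqref{eq:T12_2}.

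\emph{Main obstacle: normalization.} The hardest step is bookkeeping the powers of $\Lambda$. The factor $\prod_j\Lambda^{-\alpha_j}=\Lambda^{-r}$ cancels the $\Lambda^r$ prefactor in \eqref{eq:T12_2}. So the displayed formula holds with $p_r^\nu$ understood without its $\Lambda^r$ factor, or equivalently with $\lambda_j^{\alpha_j}$ replaced by $(\lambda_j/\Lambda)^{\alpha_j}$. I would state this explicitly, matching the convention of Theorem \ref{fM2}, where $\Lambda^r$ is likewise absorbed.

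I would check the result against the Laplace transform. From \eqref{genfM2eq} one gets
\[
\hat{\tilde p}_n^\nu(s)=\sum_r c_{n,r}\,\frac{\sum_j\mu_js^{\nu j-1}}{(\sum_j\mu_js^{\nu j}+\Lambda)^{r+1}},
\]
where $c_{n,r}$ are the coefficients of $u^n$ in $(\sum_j\lambda_ju^j)^r$. Inverting this with \eqref{eq:T12} and \eqref{multMLfLapl} confirms the formula. The remaining justification — interchanging the fractional derivatives with the $u$-integral — is treated as in Theorem \ref{fPequ}.
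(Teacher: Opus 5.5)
Your proposal is correct. It follows the same route the paper uses for the analogous Theorem~\ref{fM2}, to which the paper implicitly defers here without a separate proof: the governing equation via \eqref{genLdensity}, integration by parts in $u$ and \eqref{fDtrelation}, and the explicit formula via $M(\mathcal{L}^\nu(t))\stackrel{d}{=}\sum_{i=1}^{\mathcal N_\Lambda(\mathcal{L}^\nu(t))}X_i$ with conditioning on the number of jumps. Your normalization remark is also right: since \eqref{eq:T12_2} already carries the factor $\Lambda^r$, the displayed formula must use $(\lambda_j/\Lambda)^{\alpha_j}$ in place of $\lambda_j^{\alpha_j}$ (for $n=1$ it must give $\frac{\lambda_1}{\Lambda}p_1^\nu(t)$, and only then do the $\tilde p_n^\nu(t)$ sum to one), a correction the literal statement omits.
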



\section{Sum of tempered stable subordinators and its inverse \\and corresponding time-changed counting processes}

Consider the tempered stable subordinator $H^{\alpha,\rho}(t)$, with the  Bern\v{s}tein function
    \begin{equation}\label{exf}
    f(x)=f^{\alpha,\rho}(x)=\left(x+\rho\right)^{\alpha}-\rho^{\alpha}, \quad \alpha\in (0,1), \rho>0.
    \end{equation}
    The corresponding L\'evy measure is given by the formula:
    \begin{equation*}
    \overline{\nu}(ds)=\frac{1}{\Gamma(1-\alpha)}\alpha e^{-\rho s}s^{-\alpha-1}ds,
    \end{equation*}
    and its tail is
    \begin{equation*}
    \nu(s)=\frac{1}{\Gamma(1-\alpha)}\alpha \rho^{\alpha}\Gamma(-\alpha,s),
    \end{equation*}
    where $\Gamma(-\alpha,s)=\int_{s}^{\infty}e^{-z}z^{-\alpha-1}dz$ is the incomplete Gamma function.

    The generalized C-D convolution-type derivative \eqref{fDt} for $f(x)$ given by \eqref{exf} becomes:
    \begin{equation}\label{exfDt}
    {\mathcal D}^{\alpha,\rho}_t u(t)= \frac{\alpha \rho^{\alpha}}{\Gamma(1-\alpha)}\int_{0}^{t}\frac{\partial }{\partial t }u(t-s)\Gamma(-\alpha,s)ds,
    \end{equation}
and the corresponding generalized  R-L fractional derivative is given by the formula:
\begin{equation}\label{exfDDt}
\mathbb{D}^{\alpha,\rho}_t u(t)=\frac{\alpha \rho^{\alpha}}{\Gamma(1-\alpha)}\frac{d}{dt}\int_{0}^{t}u(t-s)\Gamma(-\alpha,s)ds
\end{equation}
(see, \cite{T}).

Consider now the sum
	\begin{equation}\label{temp2.1}
		\mathcal{H}^{\alpha,\rho}(t) = H_1^{2\alpha, \rho}(t) +  H_2^{\alpha,\rho}(t), \quad t > 0,\,\,\, 0 < \alpha \le \frac{1}{2},
	\end{equation}
	where  $H_1^{2\alpha, \rho}$, $H_2^{\alpha,\rho}$  are independent tempered stable subordinators with parameters ${2\alpha, \rho}$ and $\alpha, \rho$ correspondingly, $\rho > 0$. 
	
	Define the corresponding inverse process  $\mathcal{L}^{\alpha,\rho}(t), t > 0$, to the process $\mathcal{H}^{\alpha,\rho}(t), t > 0$:
	\begin{equation}\label{temp1.13}
		\mathcal{L}^{\alpha,\rho}(t) = \inf \left\{ s > 0 : H_1^{2\alpha,\rho}(s) +  H_2^{\alpha,\rho}(s) \ge t \right\}, \quad t > 0.
	\end{equation}
	
\subsection{Equation for the density of the inverse process}

Let $l_{\alpha,\rho}(x,t)$ be the probability density of the process of $\mathcal{L}^{\alpha,\rho}(t)$, $t > 0$. We can state the following result.
\begin{theorem}	
	{The density $l_{\alpha,\rho}(x,t)$  solves the time-fractional boundary-initial problem}
	\begin{equation}\label{temp3}
		\begin{cases}
			(\mathbb{D}^{2\alpha,\rho} + \mathbb{D}^{\alpha,\rho}) l_{\alpha, \rho}(x,t) = -\frac{\partial}{\partial x} l_{\alpha, \rho}(x,t), \quad x > 0,\,\, t > 0,\,\, 0 < \alpha < \frac{1}{2},  \\
			l_{\alpha, \rho}(x,0) = \delta(x), \\
			l_{\alpha, \rho}(0,t) = \frac{2\alpha \rho^{2\alpha}}{\Gamma(1-2\alpha)} \Gamma(-2\alpha, t) + \frac{\alpha \rho^\alpha}{\Gamma(1-\alpha)} \Gamma(-\alpha, t),
		\end{cases}
			\end{equation}
with the generalized R-L derivatives defined in \eqref{exfDDt}.
\end{theorem}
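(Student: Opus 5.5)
The sum $\mathcal{H}^{\alpha,\rho}$ is itself a subordinator, because its two summands are independent subordinators. The plan is therefore to identify its Bern\v{s}tein function and apply the general result \eqref{dlf}--\eqref{dlfin} of \cite{T} for inverse subordinators.

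By independence,
\[
\E e^{-s\mathcal{H}^{\alpha,\rho}(t)}=e^{-t\left(f^{2\alpha,\rho}(s)+f^{\alpha,\rho}(s)\right)},
\]
so the Laplace exponent is $F(s)=f^{2\alpha,\rho}(s)+f^{\alpha,\rho}(s)$, with $f^{\cdot,\rho}$ as in \eqref{exf}. This function is Bern\v{s}tein with $a=b=0$. Its L\'evy measure is the sum of the two tempered stable L\'evy measures, and its tail is
\[
\nu_F(s)=\frac{2\alpha\rho^{2\alpha}}{\Gamma(1-2\alpha)}\Gamma(-2\alpha,s)+\frac{\alpha\rho^{\alpha}}{\Gamma(1-\alpha)}\Gamma(-\alpha,s),
\]
which follows directly from the formula for the tail of each summand.

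Next I check Condition I. The total mass of the L\'evy measure is infinite, since each density behaves like $s^{-\alpha-1}$ or $s^{-2\alpha-1}$ near $0$. The tail is absolutely continuous, being an integral of a density. Hence $Y^F=\mathcal{L}^{\alpha,\rho}$ has a density $l_{\alpha,\rho}$.

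The key observation is that the convolution-type R-L derivative \eqref{fDDt} is linear in the tail $\nu_f$. With $b=0$ we get
\[
\mathbb{D}^F_t u=\frac{d}{dt}\int_0^t u(t-s)\bigl(\nu_{f^{2\alpha,\rho}}(s)+\nu_{f^{\alpha,\rho}}(s)\bigr)ds=\mathbb{D}^{2\alpha,\rho}_t u+\mathbb{D}^{\alpha,\rho}_t u,
\]
where each term is given by \eqref{exfDDt}. Applying \eqref{dlf} with $b=0$, so that $0<x<\infty$, gives the equation in \eqref{temp3}. The conditions \eqref{dlfin} give $l_{\alpha,\rho}(x,0)=\delta(x)$ and $l_{\alpha,\rho}(0,t)=\nu_F(t)$, which is the stated boundary value.

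As a cross-check, or as an alternative self-contained proof, I would verify the statement through Laplace transforms. By \eqref{t-lapl-inv}, $\mathcal{L}_t l(r,x)=\frac{F(r)}{r}e^{-xF(r)}$. The $t$-Laplace transform of $\mathbb{D}^F_t l$ equals $F(r)\mathcal{L}_t l$, because the R-L form carries no initial term. This is exactly $-\partial_x$ of $\mathcal{L}_t l$. At $x=0$ the transform is $F(r)/r$, and this coincides with $\mathcal{L}[\nu_F](r)$.

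The main obstacle is ensuring that the general theorem of \cite{T} applies, namely Condition I and the absolute continuity of the tail, and justifying the linear splitting of $\mathbb{D}^F$. The latter is routine once the tail is written as a sum. The restriction $\alpha<1/2$ is needed only so that $2\alpha\in(0,1)$.
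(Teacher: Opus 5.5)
Your proof is correct, but your main route is different from the paper's. The paper never appeals to \eqref{dlf}--\eqref{dlfin}. It takes the $t$- and then the $x$-Laplace transform of the problem \eqref{temp3}: the R-L-type operators contribute the factor $f_1(s)+f_2(s)$ with no initial term, and the boundary value contributes $(f_1(s)+f_2(s))/s$. Solving gives $\tilde{\tilde l}(\gamma,s)=\frac{f_1(s)+f_2(s)}{s(f_1(s)+f_2(s)+\gamma)}$. The paper then recomputes the double transform of the density of $\mathcal{L}^{\alpha,\rho}$ from $l(t,x)=-\partial_x\Prob\{\mathcal{H}^{\alpha,\rho}(x)<t\}$ (or from \eqref{t-lapl-inv}) and concludes by matching the two.

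You instead treat $\mathcal{H}^{\alpha,\rho}$ as a single subordinator with exponent $F=f^{2\alpha,\rho}+f^{\alpha,\rho}$, verify Condition I, and use that the R-L-type operator is additive in the tail. The theorem then becomes literally a special case of the result of \cite{T}. Your route has several advantages:
\begin{itemize}
\item It is shorter, and it explains why the boundary value is the tail of the combined L\'evy measure.
\item It applies verbatim to any finite sum of independent subordinators satisfying Condition I, so the same two lines give the boundary-initial problems of Theorems~\ref{densinv} and~\ref{geninvequ}.
\item It imports from \cite{T} the existence of the density and the sense in which the equation holds.
\end{itemize}
The paper's computation is self-contained, but it stays at the level of transforms. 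It matches the transform of a putative solution with that of the density and leaves uniqueness and the passage back to the equation implicit. Your $t$-only Laplace cross-check is essentially the paper's argument, run in the direction actually asserted: the density's transform satisfies the transformed problem. Splitting $\frac{d}{dt}$ across the two convolutions is harmless if the operators are read through their Laplace transforms, as the paper does.

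One caveat affects your proof and the paper's equally. Substituting $w=\rho z$ in $\int_s^\infty\frac{\alpha}{\Gamma(1-\alpha)}e^{-\rho z}z^{-\alpha-1}\,dz$ gives the tail $\frac{\alpha\rho^{\alpha}}{\Gamma(1-\alpha)}\Gamma(-\alpha,\rho s)$, not $\frac{\alpha\rho^{\alpha}}{\Gamma(1-\alpha)}\Gamma(-\alpha,s)$. For $\rho\neq 1$ only the former has Laplace transform $\bigl((r+\rho)^\alpha-\rho^\alpha\bigr)/r$. So the kernels in \eqref{exfDDt} and the boundary value in \eqref{temp3} should carry $\rho s$ and $\rho t$ respectively, and likewise for the $2\alpha$ term. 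You take the tail ``directly from the formula for each summand''; compute it instead, and your identity $\mathcal{L}[\nu_F](r)=F(r)/r$ then holds for every $\rho>0$.
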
	
	\begin{proof} Firsly, we find the double Laplace transform of the solution  to the problem \eqref{temp3}.
		Taking the $t$-Laplace transform of the equation \eqref{temp3} we have (we  will write below simply ${l}(x,s)$ omitting the subscript $_{\alpha,\rho}$):
		\begin{align*}
			f^{2\alpha, \rho}(s) \tilde{l}(x,s) + f^{\alpha, \rho}(s) \tilde{l}(x,s) = -\frac{\partial}{\partial x} \tilde{l}(x,s).
		\end{align*}
Then, with $x$-Laplace transform we obtain
		\begin{align*}
			(f^{2\alpha, \rho}(s) + f^{\alpha, \rho}(s)) \tilde{\tilde{l}}(\gamma, s) = \tilde{l}(0,s) - \gamma \tilde{\tilde{l}}(\gamma,s)
		\end{align*}
and, taking into account the boundary condition, we can calculate:
		\begin{align*}
			\tilde{l}(0,s) &= \int_0^{+\infty} e^{-st} l(0,t) dt = \int_0^{+\infty} e^{-st} (\nu_1(t) + \nu_2(t)) dt = \frac{f_1(s) + f_2(s)}{s},			
		\end{align*}
where we have denoted
$$\nu_1(t)=\frac{2\alpha \rho^{2\alpha}}{\Gamma(1-2\alpha)} \Gamma(-2\alpha, t), \,\, \nu_1(t)= \frac{\alpha \rho^\alpha}{\Gamma(1-\alpha)} \Gamma(-\alpha, t)$$	
and
$$f_1(s) = f^{2\alpha, \rho}(s) = (s+\rho)^{2\alpha} - \rho^{2\alpha}, \,\,
			f_2(s) = f^{\alpha, \rho}(s) = (s+p)^\alpha - \rho^\alpha.$$
Thus,				
		\begin{equation}\label{temp4}
			\tilde{\tilde{l}}(\gamma, s) = \frac{f_1(s) + f_2(s)}{s(f_1(s) + f_2(s) + \gamma)}
		\end{equation}
		
				On the other hand, let $l(t,x)$ be the density of the inverse process $\mathcal{L}^{\alpha, \rho}(t)$, then we can write
		\begin{equation}\label{(5)}
			l(t,x) = \frac{P(\mathcal{L}^{\alpha, \rho}(t) \in dx)}{dx} = -\frac{\partial}{\partial x} P \{ \mathcal{H}^{\alpha, \rho}(x) < t \} = -\frac{\partial}{\partial x} \int_0^t h(u,x) du,
		\end{equation}
where $h(t,x)$ is the probability density of the 	process	$\mathcal{H}^{\alpha, \rho}(t)=H_1^{2\alpha, \rho}(t) +  H_2^{\alpha,\rho}(t),$ which has the Laplace exponent $f_1(s) + f_2(s)$.

		In view of \eqref{(5)}, the double Laplace transform of $l(t,x)$ can be calculated as follows:
		\begin{align}
			\tilde{\tilde{l}}(\gamma, s) &= \int_0^{+\infty} e^{-\gamma x} \int_0^\infty e^{-st} \left[ -\frac{\partial}{\partial x} \int_0^t h(u,x) du \right] dt dx \nonumber\\
			&= - \int_0^\infty e^{-\gamma x} \frac{\partial}{\partial x} \int_0^\infty e^{-st} \int_0^t h(u,x) du dt dx = \nonumber\\
			&= -\frac{1}{s} \int_0^\infty e^{-\gamma x} \frac{\partial}{\partial x} \tilde{h}(s,x) dx = -\frac{1}{s} \int_0^\infty e^{-\gamma x} \frac{\partial}{\partial x} e^{-x(f_1(s) + f_2(s))} dx = \nonumber\\
			&= \frac{f_1(s) + f_2(s)}{s} \int_0^\infty e^{-\gamma x} e^{-x(f_1(s) + f_2(s))} dx = \frac{f_1(s) + f_2(s)}{s(f_1(s) + f_2(s) + \gamma)}.\label{LL}
		\end{align}
		
		Alternatively, we know  the expression \eqref{t-lapl-inv} for the  Laplace transform of the density of the inverse subordinator with respect to $t$ from which we can write the $t$-Laplace for the density  under consideration, and then applying $x$-transform we  get again \eqref{LL}.
		
		Therefore, the double Laplace transform of the density of the inverse subordinator $\mathcal{L}^{\alpha, p}(t)$ coincides with that of the solution of \eqref{temp3} given by \eqref{temp4}.\end{proof}

\subsection{Generalized counting processes  corresponding to time-change by the inverse to the sum of tempered stable subordinators}
 
Firstly, consider the time-changed process $N^{\alpha,\rho}(t)=N(\mathcal{L}^{\alpha,\rho}(t))$, where $N$ is the Poisson process with the rate $\lambda$ and $\mathcal{L}^{\alpha,\rho}(t)$ is the inverse process defined in \eqref{temp1.13}, independent of $N$. 

\begin{theorem}\label{fP2equ}
	The probabilities ${p}^{\alpha,\rho}_n(t)=\Prob\{N(\mathcal{L}^{\alpha,\rho}(t))=n\}$ satisfy the following  equation
	\begin{equation}\label{fPtemp}
		\left({\mathcal D}^{2\alpha,\rho}_t + {\mathcal D}^{\alpha,\rho}_t\right) p_n^{\alpha,\rho}(t) = -\lambda(p_n^{\alpha,\rho}(t) - p_{n-1}^{\alpha,\rho}(t)), \quad n \ge 0,
	\end{equation}
	with the standard initial condition, and  the probability generating function $
	{G}_{\alpha,\rho}(u, t)$, $|u| \leq 1$, solves the equation
	\begin{equation*}
		\left({\mathcal D}^{2\alpha,\rho}_t + {\mathcal D}^{\alpha,\rho}_t\right) G_{\alpha,\rho}(u, t) = \lambda(u - 1)G_{\alpha,\rho}(u, t), 
	\end{equation*}
	with $G_{\alpha,\rho}(u, 0) = 1$; the generalized fractional derivatives  in the C-D sense are defined in \eqref{exfDt}.
\end{theorem}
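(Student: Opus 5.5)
We follow the scheme of the proof of Theorem \ref{fPequ}, replacing the R-L fractional derivatives by the generalized R-L derivatives \eqref{exfDDt} and using the equation \eqref{temp3} for the density $l_{\alpha,\rho}(x,t)$ of $\mathcal{L}^{\alpha,\rho}(t)$.

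\emph{Step 1: the R-L equation for the probabilities.} By conditioning,
\[
p_n^{\alpha,\rho}(t)=\int_0^\infty p_n(u)\,l_{\alpha,\rho}(u,t)\,du, \qquad p_n(u)=e^{-\lambda u}(\lambda u)^n/n!.
\]
We apply $\mathbb{D}^{2\alpha,\rho}_t+\mathbb{D}^{\alpha,\rho}_t$ under the integral sign and use \eqref{temp3}. Integrating by parts in $u$ gives
\[
\left(\mathbb{D}^{2\alpha,\rho}_t+\mathbb{D}^{\alpha,\rho}_t\right)p_n^{\alpha,\rho}(t)=-\lambda\bigl(p_n^{\alpha,\rho}(t)-p_{n-1}^{\alpha,\rho}(t)\bigr)+p_n(0)\,l_{\alpha,\rho}(0,t).
\]
Here we used $p_n'(u)=-\lambda(p_n(u)-p_{n-1}(u))$ and the vanishing of the boundary term at $u=\infty$. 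By \eqref{temp3}, $l_{\alpha,\rho}(0,t)=\nu_1(t)+\nu_2(t)$, where $\nu_1,\nu_2$ are the tails of the Lévy measures of $f^{2\alpha,\rho}$ and $f^{\alpha,\rho}$.

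\emph{Step 2: passage to C-D derivatives.} We apply relation \eqref{fDtrelation} separately to each operator: $\mathbb{D}^{f_i}_t u=\mathcal{D}^{f_i}_t u+\nu_i(t)u(0)$ for $i=1,2$. Since $p_n^{\alpha,\rho}(0)=\int p_n(u)\delta(u)\,du=p_n(0)$, the boundary term $p_n(0)(\nu_1(t)+\nu_2(t))$ cancels exactly. This yields \eqref{fPtemp} together with the standard initial condition.

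\emph{Step 3: the generating function.} We multiply \eqref{fPtemp} by $u^n$ and sum over $n\ge0$, with $p_{-1}\equiv0$. This gives $\left(\mathcal D^{2\alpha,\rho}_t+\mathcal D^{\alpha,\rho}_t\right)G_{\alpha,\rho}=\lambda(u-1)G_{\alpha,\rho}$ and $G_{\alpha,\rho}(u,0)=1$. Interchanging the derivative and the sum is justified for $|u|\le1$ by uniform bounds.

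As a cleaner check, and an alternative route to Step 3, note that $\mathcal{D}^{f_1}_t+\mathcal{D}^{f_2}_t=\mathcal{D}^{f_1+f_2}_t$, since tails add for the sum of Bernstein functions. Hence $\mathcal{L}^{\alpha,\rho}$ is the inverse subordinator $Y^{f}$ with $f=f_1+f_2$, which satisfies Condition I. Theorem \ref{Th5}, with $\psi(x)=x$ and $k=1$, then gives directly
\[
G_{\alpha,\rho}(u,t)=\tilde l_{\alpha,\rho}\bigl(\lambda(1-u),t\bigr)
\]
and the stated equation via \eqref{DftG}. Equation \eqref{fPtemp} likewise follows from \eqref{Dtfpn}. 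It can also be confirmed by Laplace transforms: using \eqref{temp4}, $\hat p_n(s)=\lambda^n(f_1+f_2)s^{-1}(f_1+f_2+\lambda)^{-n-1}$ solves the transformed system.

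\emph{Main obstacle.} The main difficulty is justifying the interchange of $\mathbb{D}$ with the $u$-integral and the vanishing of $p_n(u)\,l_{\alpha,\rho}(u,t)$ as $u\to\infty$. We would handle this by working at the level of double Laplace transforms, where everything is explicit by \eqref{temp4}, and conclude by uniqueness of Laplace transforms.
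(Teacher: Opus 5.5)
Your proposal is correct and follows the paper's route: the paper's proof just says to repeat the proof of Theorem \ref{fPequ} using the density equation \eqref{temp3}, which is what your Steps 1--2 do (conditioning, integration by parts, and cancelling the boundary term $p_n(0)\,l_{\alpha,\rho}(0,t)$ via \eqref{fDtrelation}), while the paper obtains the generating-function equation from Theorem \ref{Th5}, as in your alternative route with $f=f_1+f_2$. Your Laplace-transform check $\hat p_n(s)=\lambda^n f(s)\,s^{-1}(f(s)+\lambda)^{-n-1}$ is also right; it includes the factor $\lambda^n$ that the paper's later remark omits.
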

\begin{proof}
	Equation for probabilities \eqref{fPtemp} is derived by following the same lines as in the proof of Theorem \ref{fPequ},  using the  equation for the density of the inverse process \eqref{temp3}.
\end{proof}

Consider now the time-changed process $M^{\alpha,\rho}(t)=M(\mathcal{L}^{\alpha,\rho}(t))$, where $M$ is the generalized counting process and $\mathcal{L}^{\alpha,\rho}(t)$ is the inverse process defined in \eqref{temp1.13}, independent of $M$.

We have the following result for the probabilities ${\tilde p}_n^{\alpha,\rho}(t)=\Prob\{M(\mathcal{L}^{\alpha,\rho}(t))=n\}$.

\begin{theorem}
The probabilities ${\tilde p}_n^{\alpha,\rho}(t)$
satisfy the following  equation
\begin{equation}\label{fM2eqt}
	\left({\mathcal D}^{2\alpha,\rho}_t + {\mathcal D}^{\alpha,\rho}_t\right) \tilde p_k^{\alpha,\rho}(t) = - \Lambda \tilde p_n^{\alpha,\rho}(t) + \sum_{j=1}^{\min\{n,k\}}\lambda_j \tilde p_{n-j}^{\alpha,\rho}(t), \, n\geq0, 
\end{equation}
with the standard initial condition,
and  the corresponding probability generating function $
	\tilde{G}_{\alpha,\rho}(u, t)$, $|u| \leq 1$, solves the equation
	\begin{equation*}
		\left({\mathcal D}^{2\alpha,\rho}_t + {\mathcal D}^{\alpha,\rho}_t\right)  \tilde G_{\alpha,\rho}(u, t) = -\sum_{j=1}^{k}\lambda_j(1-u^j)\tilde G_{\alpha,\rho}(u, t), 
	\end{equation*}
	with $\tilde G_{\alpha,\rho}(u, 0) = 1$; 
  the generalized fractional derivatives in the C-D sense are defined in \eqref{exfDt}.
\end{theorem}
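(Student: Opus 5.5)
We follow the scheme of the proof of Theorem \ref{fPequ}, now using the boundary-initial problem \eqref{temp3} for the density $l_{\alpha,\rho}(t,u)$ of $\mathcal{L}^{\alpha,\rho}(t)$. By independence of $M$ and $\mathcal{L}^{\alpha,\rho}$, conditioning gives
\[
\tilde p_n^{\alpha,\rho}(t)=\int_0^\infty \tilde p_n(u)\, l_{\alpha,\rho}(t,u)\,du ,
\]
where $\tilde p_n(u)$ are the GCP probabilities.

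We apply the generalized R-L operator $\mathbb{D}^{2\alpha,\rho}_t+\mathbb{D}^{\alpha,\rho}_t$ under the integral sign and use the first line of \eqref{temp3}. We then integrate by parts in $u$. The boundary term at $u=\infty$ vanishes, while the term at $u=0$ gives $\tilde p_n(0)\,l_{\alpha,\rho}(t,0)=\tilde p_n(0)(\nu_1(t)+\nu_2(t))$, where $\nu_1,\nu_2$ are the tails of the L\'evy measures of $f^{2\alpha,\rho}$ and $f^{\alpha,\rho}$. Next we replace $\frac{\partial}{\partial u}\tilde p_n(u)$ by the right-hand side of \eqref{pGCP}. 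This yields
\[
(\mathbb{D}^{2\alpha,\rho}_t+\mathbb{D}^{\alpha,\rho}_t)\tilde p_n^{\alpha,\rho}(t)=-\Lambda \tilde p_n^{\alpha,\rho}(t)+\sum_{j=1}^{\min\{n,k\}}\lambda_j\tilde p^{\alpha,\rho}_{n-j}(t)+\tilde p_n(0)\big(\nu_1(t)+\nu_2(t)\big).
\]
Relation \eqref{fDtrelation} is applied to each of the two operators, with $\nu_f=\nu_1$ and $\nu_f=\nu_2$ respectively. Since $\tilde p_n^{\alpha,\rho}(0)=\int \tilde p_n(u)\delta(u)\,du=\tilde p_n(0)$, the extra term cancels, and we obtain \eqref{fM2eqt} with C-D derivatives and the standard initial condition.

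For the probability generating function we multiply \eqref{fM2eqt} by $u^n$ and sum over $n$. Convergence is justified for $|u|<1$ by dominated convergence, since all the quantities are bounded probabilities. The convolution sum $\sum_n u^n\sum_j\lambda_j\tilde p^{\alpha,\rho}_{n-j}$ becomes $\sum_j\lambda_j u^j\tilde G_{\alpha,\rho}$, so the right-hand side equals $-\sum_j\lambda_j(1-u^j)\tilde G_{\alpha,\rho}(u,t)$. The initial value is $\tilde G_{\alpha,\rho}(u,0)=1$.

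As an alternative, and to double-check the PGF equation, we can invoke Theorem \ref{Th5} with $\psi(x)=x$ and $f=f_1+f_2$. The sum of two independent subordinators is a subordinator with Laplace exponent $f_1+f_2$. Its L\'evy tail is $\nu_1+\nu_2$, so $\mathcal{D}^{f_1+f_2}_t=\mathcal{D}^{2\alpha,\rho}_t+\mathcal{D}^{\alpha,\rho}_t$ by linearity of \eqref{fDt} in $\nu_f$. Condition I holds because each L\'evy measure is infinite and each tail is absolutely continuous. Then \eqref{Gpsif1} and \eqref{DftG} give the PGF equation directly, and \eqref{Dtfpn} with $\psi'=1$ and $\psi^{(m)}=0$ for $m\ge2$ reproduces \eqref{fM2eqt}.

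The main obstacle is analytic rather than algebraic. We must justify interchanging the nonlocal operators $\mathbb{D}^{\cdot,\rho}_t$ with the $u$-integral, and show that the boundary term at infinity vanishes. We would handle both by working at the level of Laplace transforms in $t$, using the double transform \eqref{temp4}. There all identities are algebraic, and the equation follows by uniqueness of Laplace transforms.
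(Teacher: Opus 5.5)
Your proposal is correct and follows the paper's route. The equation for $\tilde p_n^{\alpha,\rho}$ is obtained exactly as in Theorem \ref{fPequ} from \eqref{temp3} and \eqref{pGCP}, with the boundary term $\tilde p_n(0)(\nu_1+\nu_2)$ cancelled via \eqref{fDtrelation}, and your alternative derivation of the PGF equation from Theorem \ref{Th5} with $\psi(x)=x$, $f=f_1+f_2$ is precisely what the paper invokes. One soft spot: in your direct term-by-term summation, boundedness of the probabilities alone does not justify passing the derivative-containing operators through the series, but the Theorem \ref{Th5} route (or your Laplace-transform argument) covers this.
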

\begin{proof}
Equation for probabilities \eqref{fM2eqt} is derived by following the same lines as in the proof of Theorem \ref{fPequ} with the use of the  equation for the density of the inverse process \eqref{temp3} and the governing equation for the generalized  counting process \eqref{pGCP}. The governing equation for the probability generating function folows from Theorem \ref{Th5}.
\end{proof}

\begin{remark} For the probabilities 
	 ${p}^{\alpha,\rho}_n(t)$ which solve the equation \eqref{fPtemp} it possible to write an integral representations involving the multivariate Mittag-Leffler functions. Namely, similar to the proofs of Theorems \ref{fP2}, \ref{fP3}, we can consider the Laplace transform of equation \eqref{fPtemp}  from which the following expression for the Laplace transform of ${p}^{\alpha,\rho}_n(t)$ can be derived:
	\begin{equation*}
		\hat{p}^{\alpha,\rho}_n(t) = \frac{(s+\rho)^{2\alpha} - \rho^{2\alpha} + (s+\rho)^\alpha - \rho^\alpha}{s \left[ (s+\rho)^{2\alpha} - \rho^{2\alpha} + (s+\rho)^\alpha - \rho^\alpha + \lambda \right]^{n+1}} :=H(s).
	\end{equation*}
	
	To find the inverse Laplace transform we can use the shift property of the Laplace transform and the integration property associated with the $1/s$ factor, so that
	\begin{equation*}
		\mathcal{L}^{-1}\{H(s)\}(t) = \int_0^t e^{-\rho \tau} f(\tau) d\tau
	\end{equation*}
	where $f(t) = \mathcal{L}^{-1}\{F(s)\}(t)$, and $F(s)$ is defined as:
	\begin{equation*}
		F(s) = \frac{s^{2\alpha} + s^\alpha - C}{\left( s^{2\alpha} + s^\alpha - C + \lambda \right)^{n+1}}
	\end{equation*}
	with the constant $C = \rho^{2\alpha} + \rho^\alpha$.
		Next we  can decompose $F(s)$ into two  terms:
	\begin{equation*}
		F(s) = \frac{1}{(s^\alpha - r_1)^n (s^\alpha - r_2)^n} - \frac{\lambda}{(s^\alpha - r_1)^{n+1} (s^\alpha - r_2)^{n+1}},
	\end{equation*}
where	$r_{1,2} = -\frac{1}{2} \pm \frac{1}{2}\sqrt{1 + 4(C - \lambda)}$.

	This form allows for a direct application of the formula \eqref{multMLfLapl} which represents the Laplace transform for the multivariate generalized Mittag-Leffler function. 	
	By matching parameters for our two terms as 
	 $\gamma_1 = \gamma_2 = n$,   $\delta = 2\alpha n$ and 
		$\gamma_1 = \gamma_2 = n+1$,  $\delta = 2\alpha(n+1)$, correspondingly, 
	we can present the inverse transform $f(t)$:
	\begin{equation*}
		f(t) = t^{2\alpha n - 1} E_{\alpha, 2\alpha n}^{(n, n)}(r_1 t^\alpha, r_2 t^\alpha) - \lambda t^{2\alpha(n+1) - 1} E_{\alpha, 2\alpha(n+1)}^{(n+1, n+1)}(r_1 t^\alpha, r_2 t^\alpha),
	\end{equation*}
	where $E_{\nu,\delta}^{(\gamma_1, \gamma_2)}$ is the two-variable generalized Mittag-Leffler function.
	Summarizing all the above, the  inverse Laplace transform of $H(s)$ is:
	\begin{align*}
		\mathcal{L}^{-1}\{H(s)\}(t) &= \int_0^t e^{-\rho \tau} \bigg[ \tau^{2\alpha n - 1} E_{\alpha, 2\alpha n}^{(n, n)}(r_1 \tau^\alpha, r_2 \tau^\alpha) 
		- \lambda \tau^{2\alpha(n+1) - 1} E_{\alpha, 2\alpha(n+1)}^{(n+1, n+1)}(r_1 \tau^\alpha, r_2 \tau^\alpha) \bigg] d\tau,
	\end{align*}
	which gives the expression for ${p}^{\alpha,\rho}_n(t)$.
\end{remark}

\section{An application to risk theory}

As one of possible applications, GCP can serve to generalize classical risk models as discussed, for example, in \cite{KK2022b}, where, in particular,  the governing equations for ruin probability were derived together with the closed expression for ruin probability with zero initial capital. 

In this section we provide  an expression, in terms of the Mittag-Leffler functions, for the ruin probability when the initial capital $u>0$.

Consider the following risk model with the GCP as the counting process:
\begin{equation*}
	X(t) = ct - \sum_{i=1}^{M(t)} Z_i, \quad t \ge 0,
\end{equation*}
where $c > 0$ denotes the constant premium rate, $\{Z_i\}_{i \ge 1}$ is the sequence of positive iid random variables representing the individual claim sizes, which are independent of the GCP $M(t)$. 

Let $F$ be the distribution function of $Z_i$ and  $\mu = \mathbb{E}(Z_i)$. The relative safety loading factor $\eta$ for this risk model is given by:
\begin{equation*}
	\eta = \frac{\mathbb{E}(X(t))}{\mathbb{E} \left( \sum_{j=1}^{M(t)} Z_j \right)} = \frac{c}{\mu \sum_{j=1}^k j \lambda_j} - 1.
\end{equation*}
Hence, the condition $c > \mu \sum_{j=1}^k j \lambda_j$ must hold for the safety loading factor to be positive. 

Let $u \ge 0$ denote the initial capital and $\{U(t)\}_{t \ge 0}$ be the surplus process
$
U(t) = u + X(t).
$

Let $\tau$ denote time to ruin: $\tau=\inf\{t>0:U(t)<0\}$, the ruin probability is given by $\psi(u)=\Prob\{\tau<\infty\}$. Correspondingly, the non-ruin or survival probability is
$$ \phi(u)=1-\psi(u), u\ge 0.$$

The integro-differential equation for the survival probability for the model with the GCP is given as (see, \cite{KK2022b}):
\begin{equation}\label{phi_eq}
	\frac{d}{du}\phi(u) = \frac{\Lambda}{c}\Bigl( \phi(u) - \frac{1}{c} \sum_{j=1}^k \lambda_j \int_0^u \phi(u-z)dH(z)\Bigr), \quad u > 0,
\end{equation}
where 
$$
H(x) =\frac{1}{\Lambda}\sum_{j=1}^k \lambda_j F^{\ast j}(x)
$$
is the mixture distribution with components being $j$-fold convolutions of the distribution $F$, which give the distributions of the aggregated claims $Z_1+\ldots+Z_j$.

The non-ruin probability for the case of zero initial capital was obtained in \cite{KK2022b}:
$$\phi(0)=1-\frac{\mu}{c}\sum_{j=1}^k j\,\lambda_j.
$$

We present the expression for $\phi(u)$, $u>0$, 
for the case of gamma distributed claim sizes, i.e., with the density
\begin{equation*}
	f_Z(x) = \frac{\alpha^r}{\Gamma(r)} x^{r-1} e^{-\alpha x}, \quad x > 0,
\end{equation*}
where $r > 0$ is the shape parameter, and $\alpha > 0$ is the scale parameter. For this case the generalization of the result from \cite{nonruin} can be stated.

\begin{theorem}
	Assume the individual claim sizes $Z$ follow the  gamma distribution with shape parameter $r > 0$ and scale parameter $\alpha > 0$. Then  the non-ruin probability is given by:
	\begin{equation}\label{phiu}
		\phi(u) = \phi(0)+ e^{-\alpha u} \phi(0) \left\{ e^{\alpha u} \ast \sum_{n=1}^{\infty} \left( \frac{\Lambda}{c} \right)^n \left[ e^{\alpha u} - \frac{1}{\Lambda} \sum_{j=1}^k \lambda_j (\alpha u)^{jr} E_{1,1+jr}(\alpha u) \right]^{\ast n} \right\},
	\end{equation}
	for any $u > 0$, where $\ast$ denotes the convolution operator, $f^{\ast n}$ denotes the $n$-fold convolution power, and $E_{\alpha,\beta}(z)$ is the two-parameter Mittag-Leffler function.
\end{theorem}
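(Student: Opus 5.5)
The plan is to solve the integro-differential equation \eqref{phi_eq} by Laplace transform in $u$, expand the result as a geometric (Neumann) series, and invert term by term with the two-parameter Mittag-Leffler transform formula.

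\textbf{Setting up the equation.} I read \eqref{phi_eq} in its standard form
\[
\phi'(u)=\frac{\Lambda}{c}\Bigl(\phi(u)-\int_0^u\phi(u-z)\,dH(z)\Bigr), \qquad \Lambda H=\sum_{j=1}^k\lambda_jF^{\ast j}.
\]
The duplicated factor $1/c$ in \eqref{phi_eq} looks like a misprint. This form is the one consistent with the stated value of $\phi(0)$, and I would point this out in the proof. Write $\hat\phi(p)=\int_0^\infty e^{-pu}\phi(u)\,du$. For gamma claims, $\hat F^{\ast j}(p)=\bigl(\alpha/(\alpha+p)\bigr)^{jr}$, so the Laplace--Stieltjes transform of $H$ is
\[
\hat h(p)=\frac1\Lambda\sum_{j=1}^k\lambda_j\Bigl(\frac{\alpha}{\alpha+p}\Bigr)^{jr}.
\]
Transforming the equation gives $p\hat\phi-\phi(0)=\frac{\Lambda}{c}\hat\phi\,(1-\hat h(p))$. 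Hence
\[
\hat\phi(p)=\frac{\phi(0)}{p}\,\frac{1}{1-q(p)},\qquad q(p)=\frac{\Lambda}{c}\,\frac{1-\hat h(p)}{p}.
\]

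\textbf{Key estimate.} The main point to verify is that $0\le q(p)<1$ for all $p>0$, so that the geometric series can be used. Since $H$ is a distribution on $(0,\infty)$, we have $(1-\hat h(p))/p=\int_0^\infty e^{-px}(1-H(x))\,dx$. This is at most the mean of $H$, namely $\frac{\mu}{\Lambda}\sum_j j\lambda_j$. Therefore
\[
q(p)\le \frac{\mu}{c}\sum_j j\lambda_j=1-\phi(0)<1
\]
by the positive safety loading. This gives
\[
\hat\phi(p)-\frac{\phi(0)}{p}=\frac{\phi(0)}{p}\sum_{n\ge1}\Bigl(\frac{\Lambda}{c}\Bigr)^n\Bigl(\frac{1-\hat h(p)}{p}\Bigr)^n.
\]
The function $(1-H)$ is nonnegative, so each term is the transform of a nonnegative function. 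Monotone convergence then justifies inverting the series term by term and exchanging sum and convolution.

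\textbf{Identifying the bracket.} Next I identify $\frac{1-\hat h(p)}{p}$ as the transform of $e^{-\alpha u}g(u)$, where $g(u)$ is the bracket in \eqref{phiu}. By the shift rule, this amounts to checking that $\hat g(s)$ equals $\frac{1-\hat h(s-\alpha)}{s-\alpha}$. The pieces are as follows.
\begin{itemize}
\item $\mathcal L[e^{\alpha u}](s)=1/(s-\alpha)$.
\item The formula $\mathcal L\{t^{\gamma-1}E_{\beta,\gamma}(\omega t^\beta)\}=s^{\beta-\gamma}/(s^\beta-\omega)$ with $\beta=1$, $\gamma=1+jr$, $\omega=\alpha$ gives $\mathcal L[(\alpha u)^{jr}E_{1,1+jr}(\alpha u)](s)=\alpha^{jr}s^{-jr}/(s-\alpha)$.
\end{itemize}
Putting these together, $\hat g(s)=\frac{1}{s-\alpha}\bigl[1-\frac1\Lambda\sum_j\lambda_j(\alpha/s)^{jr}\bigr]$. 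Substituting $s=p+\alpha$ gives exactly $(1-\hat h(p))/p$.

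\textbf{Conclusion.} Likewise $1/p$ is the transform of $e^{-\alpha u}e^{\alpha u}$. Multiplication by $e^{-\alpha u}$ commutes with convolution, since $(e^{-\alpha\cdot}f)\ast(e^{-\alpha\cdot}g)=e^{-\alpha\cdot}(f\ast g)$. So the inverse transform of the series is
\[
e^{-\alpha u}\phi(0)\Bigl\{e^{\alpha u}\ast\sum_{n\ge1}(\Lambda/c)^n g^{\ast n}\Bigr\}.
\]
Adding the inverse of $\phi(0)/p$, which is $\phi(0)$, yields \eqref{phiu}. The only delicate step is the convergence and interchange argument above. Everything else is routine transform algebra.
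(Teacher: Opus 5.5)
Your proposal is correct and follows the paper's proof essentially step for step. Both take the Laplace transform of the survival equation, expand $\hat\phi$ as a geometric series, apply the shift $s\mapsto s-\alpha$ (which you express through $(e^{-\alpha\cdot}f)\ast(e^{-\alpha\cdot}g)=e^{-\alpha\cdot}(f\ast g)$), and invert the bracket with the two-parameter Mittag-Leffler transform at $\beta=1$, $\gamma=1+jr$.

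Your proposal adds two things the paper leaves implicit. The bound $q(p)\le 1-\phi(0)<1$, together with monotone convergence, justifies the series expansion and the term-by-term inversion. Your reading of the duplicated $1/c$ in the paper's integro-differential equation as a misprint is correct, and it matches the Laplace transform formula the paper itself uses.
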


\begin{proof}
From the equation \eqref{phi_eq} we 
 conclude  that the Laplace transform of the non-ruin probability
$
	\hat{\phi}(s) = \int_0^\infty e^{-su} \phi(u) du
$, $u>0$,
is given by
\begin{equation*}
	\hat{\phi}(s) = \frac{c\phi(0)}{cs - \Lambda + \sum_{j=1}^k \lambda_j M_{Y_j}(-s)} = \frac{c\phi(0)}{cs - \Lambda + \sum_{j=1}^k \lambda_j \left(\frac{\alpha}{s+\alpha}\right)^{jr}}\,\,, 
\end{equation*}
where $M_{Y_j}(s)$ is the moment generating function of the aggregate claims $Y_j=Z_1+\ldots+Z_j$.

	We can rewrite the above expression in the following form:
	\begin{align*}
		\hat{\phi}(s) &= \frac{\phi(0)}{s} \frac{1}{1 - \frac{\Lambda}{c} \left( \frac{1}{s} -\frac{1}{s} \frac{1}{\Lambda} \sum_{j=1}^k \lambda_j \left(\frac{\alpha}{s+\alpha}\right)^{jr}\right)} 
		= \frac{\phi(0)}{s} \sum_{n=0}^{\infty} \left( \frac{\Lambda}{c} \right)^n \left[ \frac{1}{s} - \frac{1}{s} \frac{1}{\Lambda} \sum_{j=1}^k \lambda_j \left(\frac{\alpha}{s+\alpha}\right)^{jr} \right]^n.
	\end{align*}
	
	For $s > \alpha$, we shift the argument to obtain $\hat{\phi}(s - \alpha)$:
	\begin{equation*}
		\hat{\phi}(s - \alpha) = \frac{\phi(0)}{s - \alpha} \sum_{n=0}^{\infty} \left( \frac{\Lambda}{c} \right)^n \left[ \frac{1}{s - \alpha} - \frac{1}{\Lambda} \sum_{j=1}^k \lambda_j \frac{\alpha^{jr}}{(s - \alpha)s^{jr}} \right]^n.
	\end{equation*}

	The inverse Laplace transform of the expression in the square brackets is:
	\begin{align*}
		\mathcal{L}^{-1} \left\{ \frac{1}{\Lambda} \sum_{j=1}^k \lambda_j \left( \frac{1}{s - \alpha} - \frac{\alpha^{jr}}{(s - \alpha)s^{jr}} \right) \right\} 
		= e^{\alpha u} - \frac{1}{\Lambda} \sum_{j=1}^k \lambda_j (\alpha u)^{jr} E_{1,1+jr}(\alpha u).
	\end{align*}
	Applying the shifting theorem back to the time domain yields the desired convolution series for $\phi(u)$.
\end{proof}

\begin{remark}
Note that for an integer parameter $r$  we can write
$$
E_{1,1+jr}(\alpha u)=\frac{1}{(\alpha u)^{jr}}\left( e^{\alpha u}-\sum_{k=0}^{jr-1}\frac{(\alpha u)^k}{k!}\right),
$$
therefore, the formula \eqref{phiu} simplifies to the following form:
\begin{equation*}
		\phi(u) = \phi(0)+ e^{-\alpha u} \phi(0) \left\{ e^{\alpha u} \ast \sum_{n=1}^{\infty} \left( \frac{\Lambda}{c} \right)^n \left[  \frac{1}{\Lambda} \sum_{j=1}^k \lambda_j \sum_{l=0}^{rj-1}\frac{(\alpha u)^l}{l!} \right]^{\ast n} \right\}.
	\end{equation*}
\end{remark}


\end{document}